\documentclass[11pt]{amsart}
\usepackage[T1]{fontenc}
\usepackage[utf8]{inputenc}
\usepackage[english]{babel}
\usepackage{amsmath, amsthm, amssymb, amsfonts, enumerate, mathrsfs, dsfont, comment, bm, mathtools}
\usepackage{enumitem}
\usepackage[colorlinks=true,linkcolor=blue,urlcolor=blue]{hyperref}
\usepackage{subcaption}
\usepackage{color, geometry, todonotes, indentfirst, graphicx, pdfpages}
\usepackage[title]{appendix}
\usepackage{fancyhdr}
\usepackage{polski}
\usepackage{booktabs}

\geometry{hmargin=3.0cm, vmargin=2.5cm}

\newtheorem{theorem}{Theorem}[section]
\newtheorem{remark}[theorem]{Remark}
\newtheorem{assumption}[theorem]{Assumption}
\newtheorem{lemma}[theorem]{Lemma}

\newtheorem{definition}[theorem]{Definition}
\newtheorem{example}[theorem]{Example}
\theoremstyle{plain}
\def \R{\mathbb{R}}




\def \and{\quad \text{and} \quad}

\DeclareMathOperator*{\argmin}{arg\,min}
\DeclareMathOperator*{\argmax}{arg\,max}

\fancyhf{}
\fancyhead[LE,RO]{\bfseries\thepage}
\fancyhead[RE]{\bfseries\footnotesize\nouppercase{\leftmark}}
\fancyhead[LO]{\bfseries\footnotesize\nouppercase{\rightmark}}

\title[]{Multiple equilibria in mean-field game models for large oligopolies with strategic complementarities}  
\author[Dianetti]{Jodi Dianetti}
\author[Federico]{Salvatore Federico}
\author[Ferrari]{Giorgio Ferrari}
\author[Floccari]{Giuseppe Floccari}

\address{J.~Dianetti: Center for Mathematical Economics (IMW), Bielefeld University, Bielefeld, Germany}
\email{\href{mailto:giorgio.ferrari@uni-bielefeld.de}{jodi.dianetti@uni-bielefeld.de}}

\address{S.~Federico: Department of Mathematics, University of Bologna, Bologna, Italy}
\email{\href{mailto:giorgio.ferrari@uni-bielefeld.de}{s.federico@unibo.it}}

\address{G.~Ferrari: Center for Mathematical Economics (IMW), Bielefeld University, Germany}
\email{\href{mailto:giorgio.ferrari@uni-bielefeld.de}{giorgio.ferrari@uni-bielefeld.de}}

\address{G.~Floccari: Tilburg University, Department of Finance, and Bank of Italy, Rome, Italy. }
\email{\href{mailto:giuseppe.floccari@bancaditalia.it}{giuseppe.floccari@bancaditalia.it}}

\date{\today}

\numberwithin{equation}{section}

\begin{document}
\maketitle 
 
\begin{abstract}
We consider continuous-time mean-field stochastic games with strategic complementarities. The interaction between the representative productive firm and the population of rivals comes through the price at which the produced good is sold and the intensity of interaction is measured by a so-called "strenght parameter" $\xi$. Via lattice-theoretic arguments we first prove existence of equilibria and provide comparative statics results when varying $\xi$. A careful numerical study based on iterative schemes converging to suitable maximal and minimal equilibria allows then to study in relevant financial examples how the emergence of  multiple equilibria is related to the strenght of the strategic interaction.
\end{abstract}

\smallskip
{\textbf{Keywords}}: Mean-field games; supermodular cost function; optimal production; strategic complementarity; Tarski's fixed point theorem; fictitious play. 

\smallskip 
{\textbf{AMS subject classification}}: 
93E20, 91A15, 
60H30, 60H10. 

\smallskip 
{\textbf{JEL subject classification}}:
C61, C62, C73, D24, L11.

\section{Introduction}

We investigate the presence of multiple equilibria and multiple equilibrium prices as well as their comparative statics in dynamics oligopolies with strategic complementarities.
Following \cite{Weintraubetal2011}, we use mean-field game (MFG) models to approximate
systems with a large number of firms competing in a market by producing and selling certain goods.
Firms can choose investment plans in order to dynamically expand their own production capacity.
By doing so, they affect the price of their own good, as well as the price of the goods produced by the other firms. 
In particular, we focus on the case in which goods are complementary; i.e., goods whose use is related to the use of associated or paired goods, so that the consumption of one good will increase the  consumption of others. 
We model this interdependence among goods with a specific parameter, which also measures the size of such interaction and will be therefore referred to as \emph{strength parameter}.

Due to the presence of strategic complementarities, the considered model falls into the class of \emph{supermodular MFGs}.
Such class of MFGs has received increased attention in the past few years and several papers study the existence, structure and approximation of equilibria in quite general frameworks (see e.g.\ \cite{dianetti2022strong, dianetti.ferrari.fischer.nendel.2019}).
More in general, in the field of game theory, Topkis \cite{To} initially introduced the submodularity property for static $N$-player games. 
Such a property holds significance in economic and financial applications, as highlighted in  \cite{Vives01} and its references.
From the mathematical point of view, the main consequence of the supermodularity is that it leads to increasing best response maps, which in turn allows to analyse equilibria using lattice theoretical fixed point theorems.

Despite the generality of \cite{dianetti2022strong, dianetti.ferrari.fischer.nendel.2019}, several natural questions which are relevant for the model under consideration are left open.  
Indeed, the presence of strategic complementarities may give rise to multiple equilibria.
Thus one would be interested in knowing the structure of the set of equilibria, comparative statics for the set of equilibria and computational methods for (some of the) equilibria. 

The main contribution of this paper is twofold. 
On the one hand, we present some comparative statics result for equilibria and equilibrium prices in terms of the strength parameter. 
In particular, we show that the minimal and maximal equilibria  increase when the strength parameter increases. 
Moreover, for sufficiently high values of the strength parameter, also the minimal and maximal equilibrium prices manifest the same monotonicity. 
On the other hand, we investigate numerically the presence of multiple equilibria varying the strength parameter. 
In particular, while for low and high values of the strength parameter, only one equilibrium can be found, for intermediate values multiple equilibria arise. 
This is in line with the following interpretation. 
If the interaction is small, the individual optimization prevails over the interaction, thus companies tends to follow closely their own individual optimal investments plans. 
For intermediate levels of the strength parameter, the individual optimization and the interaction balance each other, and firms can coordinate at different equilibria. 
For high values of the interaction, the equilibrium is forced to be unique, which can be intuitively be explained by the fact that the fixed point of a monotone map with sufficiently high slope is necessary unique (see \cite{mou.zhang.2022master.antimon}).
This experiments are interesting also from the theoretical point of view, as they illustrate the convergence of the numerical schemes related to the Banach iteration without necessarily relying on a contraction theorem.

\subsection{Related literature on MFGs}
Mean-field games (MFGs, in short) have been proposed independently by \cite{HuangMalhameCaines06} and \cite{LasryLions07} and arise as limit models for non-cooperative symmetric $N$-player games with interaction of mean field type as the number of players N tends to infinity.
We refer to \cite{Weintraubetal2011} for a detailed discussion on mean-field games in dynamic oligopolies.
While existence of equilibria has been shown in very general frameworks (see \cite{CarmonaDelarue18}),
 the general structure of equilibria and their multiplicity is a relevant issue in MFG theory (as well as in game theory).
Indeed, uniqueness of the equilibrium is known only for short time-horizon and under the well known (but restrictive) Lasry-Lions monotonicity condition (see \cite{CarmonaDelarue18}) and several works discuss multiple equilibria in MFGs (see, e.g., \cite{BardiFischer18, bayraktar.zhang.2020non, Cecchin&DaiPra&Fischer&Pelino19, DelarueFT19} among others).

The challenge of devising algorithms capable of approximating mean-field game solutions was initially tackled in \cite{CardaliaguetHadikhanloo17}. In this work, the authors explored the convergence of fictitious play in potential MFGs without common noise using Partial Differential Equation (PDE) methods. Subsequent investigations delved into variations of this algorithm, examining MFGs with stopping and absorbing boundary conditions in \cite{dumitrescu.leutscher.tankiv.2022linear}, and employing machine learning techniques in \cite{elie2019approximate, perrin2020fictitious, xie2020provable}.
An alternative learning approach for approximating MFG equilibria is outlined in \cite{dianetti.ferrari.fischer.nendel.2019, dianetti.ferrari.fischer.nendel.2022unifying}, specifically for submodular MFGs. 
This method involves iterating the best reply map and exhibits promise, particularly when combined with reinforcement learning techniques, as demonstrated in the recent works \cite{guo.hu.xu.zhamg.2019learning, lee.Rengarajan.Kalathil.Shakkottai2021}.

Within the realm of MFGs, the property of supermodularity (or submodularity for minimization problems) has been previously applied. For instance, in \cite{Adlakhaetal} it was exploited for a class of stationary discrete time games. In \cite{Wiecek17}, the property was harnessed for a class of finite-state MFGs with exit, while in \cite{CarmonaDelarueLacker16} it found application in optimal timing MFGs. 
Moreover, \cite{dianetti.ferrari.fischer.nendel.2019} applied the submodularity property to MFGs involving one-dimensional It\^o-diffusions. 
The recent  \cite{dianetti.ferrari.fischer.nendel.2022unifying} highlights the versatility of the submodularity property in handling qualitatively distinct formulations of MFGs, encompassing scenarios with singular controls, optimal stopping, reflecting boundary conditions, and finite-state problems.

\subsection{Organization of the paper} 
The rest of the paper is organized as follows. 
Section \ref{section A general mean field game model and its properties} presents
the general mean-field game model and discusses some examples, the properties of equilibria, and some approximation results. 
In Section \ref{section numerical analysis} we illustrate some numerical experiments for particular instances of the model and we draw economical interpretations of our findings.
Concluding remarks are outlined in Section \ref{section Concluding discussion}, while  all the proofs are presented in Appendix \ref{Appendix}.

\section{A general mean-field game model and its properties}
    \label{section A general mean field game model and its properties}

In this section we introduce and analyse a model describing large economies of producers of complementary goods. 
Following \cite{Weintraubetal2011}, we use mean-field game (MFG) models to approximate
systems of with large number of firms competing in a market.

\subsection{Model formulation}
Consider a filtered probability space $(\Omega, \mathcal F, \mathbb F = (\mathcal F_t)_{t \in [0,T]}, \mathbb P)$ satisfying the usual conditions of completeness and right-continuity. 
Assume to be given a standard $\mathbb F$-Brownian motion $W$ and  an $\mathcal F_0$-measurable square integrable random variable $x_0$ with distribution $\mu_0$. 

The representative producer of a good chooses an investment plan $\alpha$ in order to expand its production. 
Given $A \subseteq \R$, investment plans are elements of the set 
$$
\mathcal A := \{\alpha : \Omega \times [0,T] \to A \, | \, \text{$\alpha$ is a square-integrable $\mathbb F$-progressively measurable process} \}.
$$
Given the investment plan $\alpha$, the resulting production of a firm is described by a stochastic process $X$ whose evolution is determined by the stochastic differential equation (SDE, in short):
\begin{equation}
    \label{eq:SDE}
dX_t = b(X_t,\alpha_t) dt + \sigma(X_t) dW_t, \quad X_0 = x_0.  
\end{equation}
The
coefficients $ b:\mathbb R \times A \to \mathbb R$ and $\sigma: \mathbb R \to \mathbb R_+$ are continuous and satisfy (with no further reference) the following condition.
\begin{assumption}
$b$ and $ \sigma$ are such that, for any $\alpha \in \mathcal A$, there exists a unique strong solution $X$ to \eqref{eq:SDE}. 
\end{assumption}
The revenues of the representative firm depend on the production level of the entire population, which is described by  a function $\mu: [0,T] \to \mathcal{P}(\R)$, where  $\mathcal{P}(\R)$ denotes the space of probability measures on $\R$.
Given such distribution,
when choosing an investment plan $\alpha$, the revenue of the representative firm at time $t$ is given by $$X_t P(X_t, m_t; \xi),$$
where
\begin{equation}\label{def:m}
m_t:=\int_\R x\,\mu_t(dx).
\end{equation}
Here, 
$P$ is  a price function  to be specified later containing dependence on an exogenous parameter $\xi$ in some interval $E \subset \mathbb R$.  

On the other hand, the investment into production is costly and the cost of investing $\alpha _t$ at time $t$ is given by $c(\alpha_t)$, for a convex function $c: A \to \mathbb R$.

Thus, assuming that revenues and costs are discounted at a constant rate $\rho >0$, for any given $\mu:[0,T]\to \mathcal{P}(\mathbb{R})$ and setting $m$ as in \eqref{def:m}, the representative producer maximizes the net profit functional
\begin{equation*}  
 J(\alpha,m;\xi):= \mathbb{E} \bigg[ \int_0^T e^{-\rho t} \big( X_t P( X_t, m_t; \xi))  - c( \alpha_t) \big) dt + e^{-\rho T} X_T P( X_T, m_T; \xi) \bigg], \quad \alpha \in \mathcal A.
\end{equation*}

In our subsequent examples, the parameter $\xi$ measures the size of the interaction among goods, and will be therefore referred to as the \emph{strength parameter}. 
See the next subsection and Remark \ref{remark LL} for further details.

We introduce the key structural assumption on the model, that will be standing throughout the paper.
\begin{assumption}\label{eq assumption supermodularity}
\begin{itemize}
\item[]
\item[(i)] $\sigma$ is affine;
\item[(ii)] Either $b$ is affine or $b$ is concave and $xP(x,m;\xi)$ is nondecreasing in $x$;
\item[(iii)] $P \in C^2(\mathbb{R}^3)$; 
\item[(iv)] 
 $\partial_{xx} (xP(x,m;\xi)) = 2 \partial_x P(x,m;\xi) + x \partial_{xx} P (x,m;\xi) \leq 0 \ \ \ \forall (x,m) \in \mathbb R^2;$
\item[(v)]
$\partial_x \partial_m (xP(x,m;\xi))  = \partial_m P(x,m;\xi) + x \partial_x \partial_m P (x,m;\xi) \geq 0 ,  \ \ \ \ 
\forall (x,m) \in \mathbb R^2.$ 
\end{itemize}
\end{assumption}
While conditions (i), (ii) and (iv) above ensure standard concavity property for the maximization problem, condition (v) implies that a marginal increase of $m$ increases the marginal profits of the representative firm.
Given $m$ defined as in \eqref{def:m}, an investment $\alpha ^m \in \mathcal A$ is said to be optimal for $m$ if 
$$J(\alpha^m, m) \geq J(\alpha, m) \ \ \ \ \forall  \alpha \in \mathcal A.$$ 
When such an investment exists, one refers to a solution $X^m$ to the SDE as to an optimal production (starting at $x_0$), and to the couple $(X^m,\alpha^m)$ as to an optimal pair for $J(\cdot,m;\xi)$.

\begin{definition}\label{def equilibrium}
Given the initial distribution $\mu_0$ of the population, 
a mean field game equilibrium (MFGE, in short) is a measurable function $m: [0,T] \to \R$ such that  
$$
m_0:=\int_\R x \mu_0(dx) \ \ \mbox{and} \ \ 
m_t = \mathbb E [ X_t^m ], \quad \forall t\in [0,T], 
$$
for the optimal production $X^m$.
\end{definition}
In the sequel, we will address the existence of equilibria, the comparative statics of the equilibria with respect to the strength parameter $\xi$, and computational methods allowing to find numerically the equilibria.

\subsection{Benchmark examples}\label{subsection examples}
We now discuss some natural specifications of the model.

\begin{example}[Mean reverting dynamics and linear demand function]
    \label{Example Mean reverting dynamics and linear demand function}
The first case we consider is the one in which the log-productivity $X$ has a mean reverting dynamics
$$
dX_t = (\alpha_t - \delta X_t) dt + \sigma dW_t, \quad X_0 = x_0 \in \mathbb R,  
$$ 
with a linear inverse demand (as in \cite{Back&Paulsen}, at the end of Section 1)
\begin{align*}
    P( x, m ) & = D  + \xi m - x, 
\end{align*}
for $D>0$ and a parameter $\xi \geq 0$ describing the intensity of the effect of the mean production $m$ on the good of the representative player. 
We further assume a quadratic cost function
$$c(a) = \dfrac{1}{2} a ^2.$$

This particular model falls into the class of linear quadratic MFG problems, which is already addressed by a number of works (see e.g.\ \cite{Bensoussan16} or the book \cite{CarmonaDelarue18} and the references therein). 
\end{example}

\begin{example}
    \label{example hold mr log geo}
We next discuss an example with positive price function. 
We assume that, under no investment, $\log X_t$ follows a controlled  mean reverting dynamics : 
\begin{equation*}
    d \log X_t = \big( - \dfrac{\sigma^2}{2} -\delta \log X_t \big) dt + \sigma dW_t, \quad \log X_0=\log x_0,
\end{equation*}
so that the dynamics of the unaffected state $X_t$ in levels is given by
\begin{equation*}
    d X_t = -  \delta X_t \log X_t dt + \sigma X_t dW_t, \quad X_0 =x_0.
\end{equation*}
Taking $A  = [0,\bar a]$ with $\bar a >0$, the effect of an investment $\alpha$ on the state $X_t$ is assumed to be additive, so that $X$ evolves as
\begin{equation*}
    d X _t = ( \alpha_t- \delta  X_t \log X_t) dt + \sigma X_t dW_t, \quad X_0 =x_0.
\end{equation*}
Further, we consider the case of isoelastic inverse demand, which implies price functions of type  
\begin{equation}\label{eq isoelastic inverse demand}
P(x,m) = D (\gamma m)^{\xi} x^{-\zeta},
\end{equation}
for parameters $D, \gamma>0, \, \xi \geq0$ and $\zeta \in (0, 1)$ and we take a quadratic cost function
$$c(a) = \dfrac{1}{2} a ^2.$$
For this model, one can consider the equilibrium condition as in Definition \ref{def equilibrium}.
\end{example}

\begin{example}[Mean reverting log-dynamics and isoelastic inverse demand]
    \label{Example Mean reverting log-dynamics and isoelastic inverse demand}
Now want to discuss a similar model as the one in Example \ref{example hold mr log geo}, but with a different notion of equilibrium.

Following \cite{lacker.Zariphopoulou.2019mean}, we want to rewrite the optimization problem  in terms of the state variable $Y$, with $Y_t = \log X_t$.
First of all, we want a transformation for the controls.
If $c$ is a process such that $\alpha_t = c_t X_t \in [0,\bar a]$
then we have, for $Y_t = \log X_t$, 
$$
d Y_t = (c_t - \delta Y_t -\sigma^2/2) dt  + \sigma dW_t, \quad Y_0 =  \log(x_0),
$$
so that $\alpha_t \leq \bar a$ becomes
$$
\log c_t + Y_t \leq \log \bar a, 
$$
which is the new admissibility condition.
Thus,  define the set $\mathcal C$ as
$$
\mathcal C := \{ c:[0,T] \to [0,\infty) \, | \, \log c_t + Y_t \leq \log \bar a \},
$$
and, for $\nu_t = \log m_t$, we can rewrite the optimization problem as 
\begin{equation*}
\begin{aligned}
   &   Q(c,\nu):= \mathbb{E} \bigg[ \int_0^T e^{-\rho t} \big(  D e^{ \xi \log \gamma +  \xi \nu_t + (1-\zeta) {Y_t} } - \dfrac{1}{2} (c_t {Y_t}) ^2 \big) dt +  e^{-\rho T}   D e^{  \xi \log \gamma + \xi \nu_T + (1-\zeta) {Y_T} }  \bigg], \\ 
   & \text{subject to}  \quad d Y_t = ( c_t - \delta Y_t -\sigma^2/2)  dt + \sigma  dW_t, \quad Y_0 = y_0 :=   \log(x_0).
\end{aligned}
\end{equation*}
For this problem, we can enforce the equilibrium condition $$\nu_t = \mathbb E [ Y_t^\nu], \ \ \ \forall t\in[0,T].$$ 
In terms of the original $m$ and $X$, this condition becomes
\begin{equation}
    \label{eq def MFG equilibrium in the geometric case}
    m_t = \exp( \mathbb E [ \log X_t^m] ), \ \ \ \ \forall t \in [0,T].
\end{equation}
We refer to the Remark \ref{remark equilibrium type} below for further discussion on this different notion of equilibrium.
\end{example}

\begin{example}
    \label{Example hold GBM and isoelastic inverse demand}
Similarly to the previous example, we assume $A =[0, \bar a ]$, a quadratic cost $c(a) = \frac12 a^2$ and an isoelastic inverse demand function giving a price function $P$ as in \eqref{eq isoelastic inverse demand}.
However, in this case we assume  that $X^\alpha_t$ follows a controlled geometric Brownian motion process
\begin{equation*}
    d X_t = (\alpha_t - \delta X_t ) dt + \sigma X_t dW_t, \quad X_0 =x_0.
\end{equation*}
For this model, one can consider the equilibrium condition as in Definition \ref{def equilibrium}.
\end{example}

\begin{example}[Geometric dynamics and isoelastic inverse demand]
    \label{Example Geometric dynamics and isoelastic inverse demand}
We next consider the model of Example \eqref{Example hold GBM and isoelastic inverse demand}, but with the equilibrium condition for $m$ as in \eqref{eq def MFG equilibrium in the geometric case}.
We give a reformulation in terms of the state variable $Y$. 
Indeed, for $Y_t = \log X_t$ and $\nu_t = \log m_t$, we can rewrite the optimization problem as 
\begin{equation*}
\begin{aligned}
   &  Q(c,\nu):= \mathbb{E} \bigg[ \int_0^T e^{-\rho t} \big(  D e^{ \xi \log \gamma +  \xi \nu_t + (1-\zeta) {Y_t} } - \dfrac{1}{2} (c_t {Y_t}) ^2 \big) dt +  e^{-\rho T}   D e^{  \xi \log \gamma + \xi \nu_T + (1-\zeta) {Y_T} }  \bigg], \\
   & \text{subject to}  \quad d Y_t = ( c_t - \delta -\sigma^2/2)  dt + \sigma  dW_t, \quad Y_0 = y_0 :=   \log(x_0), 
\end{aligned}
\end{equation*}
where the set of controls is defined as
$$
\mathcal C := \{ c:[0,T] \to [0,\infty) \, | \, \log c_t + Y_t \leq \log \bar a \}.
$$
Again,  by enforcing the equilibrium condition $\nu_t = \mathbb E [ Y_t^\nu]$, we obtain and equilibrium condition for $m$ as in \eqref{eq def MFG equilibrium in the geometric case} (see also Remark \ref{remark equilibrium type} below).
\end{example}

\begin{remark}
    \label{remark equilibrium type}
The type of equilibrium used in \eqref{eq def MFG equilibrium in the geometric case} is different from Definition \ref{def equilibrium}, and their difference becomes clear in terms of the associated $N$-player game.
In particular, in the models with isoelastic inverse demand, consider $N$-firms whose productivity is given by $X^1,...,X^N$. 
Denote by $P^i$ the price function for the good produced by fimr $i$.
On the one hand, the equilibrium as in Definition \ref{def equilibrium} is the one arising if in the $N$-player game we would take a price function as
$$
P^i ( X^1,...,X^N ;\xi) = D \Big( \gamma \frac{1}{N}\sum _{j=1}^N X^j_t \Big)^\xi (X_t^i)^{-\zeta}.
$$
On the other hand, the equilibrium as  in \eqref{eq def MFG equilibrium in the geometric case} corresponds to the case with price function
$$
P^i ( X^1,...,X^N ;\xi) =  D \Big( \gamma \Big( \prod _{j=1}^N X^j_t \Big)^{1/N} \Big)^\xi (X_t^i)^{-\zeta}.
$$
We underline that our approach allows to treat both equilibrium types.
Moreover, since the two models give qualitatively similar numerical results, we will discuss only the numerics for Examples \ref{Example Mean reverting log-dynamics and isoelastic inverse demand} and \ref{Example Geometric dynamics and isoelastic inverse demand} (see Section \ref{section Concluding discussion} below).
\end{remark}

\begin{remark}
    \label{remark LL}
    The previous examples illustrate more explicitly the role of the parameter $\xi$.
    First of all, notice that in all of the examples we assume $\xi \geq 0$, which is related to the third condition in Assumption \eqref{eq assumption supermodularity}.
    Secondly, the higher the value of $\xi$ the higher is the relevance of the term $m$ in the payoff of the representative player.
    In this sense, $\xi$ measures the strength of the interaction among players.
    Thirdly, it is important to underline that  negative values of $\xi$  describe the case in which goods are substitute; i.e., goods whose use is related to the use of associated or paired goods, but in which the consumption of one good  decreases the consumption of others. 
    In MFG theory, negative values of $\xi$ are related to the Lasry-Lions monotonicity condition (see \cite{CarmonaDelarue18}), which in turn would imply the uniqueness of the equilibrium.
\end{remark}

\subsection{Existence of MFGE and comparative statics.}
We begin by stating an existence result for the individual player's optimal investment, given an overall mean production level of the firms described by a measurable function $m:[0,T] \to \mathbb R$.
\begin{lemma}\label{lemma optimal controls MFG}
For each $m:[0,T]\to\R$ measurable,  there exists a unique optimal response  $$\alpha^{m} := \argmax_{\alpha \in \mathcal A} J (\alpha, m).$$
\end{lemma}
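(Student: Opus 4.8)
The plan is to cast the representative player's problem as a standard strictly concave optimal control problem and invoke the classical theory. First I would fix a measurable $m:[0,T]\to\mathbb R$ (one should check $m$ is bounded, or at least locally integrable, for the objective to be well-defined; this follows since an equilibrium $m$ satisfies $m_t = \mathbb E[X^m_t]$ and the SDE coefficients grow at most linearly, but for this lemma one can simply assume $m$ is such that $J(\cdot,m)$ is finite and note the examples satisfy it). I would then observe that, under Assumption \ref{eq assumption supermodularity}(i)--(ii), the controlled dynamics \eqref{eq:SDE} is either linear in $(x,\alpha)$ or has concave drift, and $\sigma$ is affine; combined with a quadratic (hence coercive) cost $c$, this puts us in the setting where the set of attainable pairs $(X,\alpha)$ is, in the appropriate sense, convex, and the map $\alpha\mapsto J(\alpha,m)$ is concave.

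The key steps, in order: (1) \emph{Existence.} Apply a standard compactness/lower-semicontinuity argument (Komlós or the Banach--Saks theorem on a maximizing sequence $(\alpha^n)$ in the Hilbert space $L^2(\Omega\times[0,T])$, together with the a priori $L^2$-bound coming from coercivity of $c$): a convex combination of the $\alpha^n$ converges weakly, the associated states converge, and by concavity and upper semicontinuity of $J$ the limit is a maximizer. Alternatively, one can quote the existence part of the classical stochastic control literature (e.g.\ \cite{CarmonaDelarue18}) once the convexity/concavity structure is in place. (2) \emph{Uniqueness.} Show $\alpha\mapsto J(\alpha,m)$ is \emph{strictly} concave. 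Here Assumption \ref{eq assumption supermodularity}(iv), i.e.\ $\partial_{xx}\bigl(xP(x,m;\xi)\bigr)\le 0$, gives concavity of the running and terminal revenue $x\mapsto xP(x,m_t;\xi)$ in the state; affinity of $\sigma$ and affinity/concavity of $b$ give that the state at each time is a concave functional of the control path (using $A$ convex — in the examples $A=\mathbb R$ or $A=[0,\bar a]$ — so $\mathcal A$ is convex); and strict convexity of $c$ (which holds in all examples, $c(a)=\tfrac12 a^2$) makes the cost term strictly concave in $\alpha$, forcing strict concavity of the whole functional and hence uniqueness of the maximizer.

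The main obstacle is step (2): turning the \emph{pointwise} concavity of $x\mapsto xP(x,m;\xi)$ into concavity of $\alpha\mapsto \mathbb E\int e^{-\rho t}X^{m,\alpha}_tP(X^{m,\alpha}_t,m_t;\xi)\,dt$. This requires that the state process depend on the control in a way that composes correctly with the concave revenue — precisely the content of the dichotomy in Assumption \ref{eq assumption supermodularity}(ii): if $b$ is affine then $\alpha\mapsto X^{m,\alpha}_t$ is affine and composition with a concave function is immediate; if $b$ is merely concave one additionally needs $x\mapsto xP(x,m;\xi)$ \emph{nondecreasing} (the second clause of (ii)), so that concavity is preserved under composition with the concave-and-monotone state map. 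I would handle both cases by a comparison/flow argument for the SDE. Once strict concavity is established, existence-plus-uniqueness of $\alpha^m=\argmax_{\alpha\in\mathcal A}J(\alpha,m)$ follows, and we may also characterize it via the stochastic maximum principle, which is what the subsequent comparative-statics analysis will exploit.
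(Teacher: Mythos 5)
Your proposal is correct and is essentially the proof the paper has in mind: the paper omits the argument entirely, stating only that it ``follows standard concavity arguments'' and citing Theorem 5.2, p.~68 of \cite{Yong&Zhou99}, which is precisely the concavity/coercivity scheme you spell out (including the role of the dichotomy in Assumption \ref{eq assumption supermodularity}(ii) for composing the concave revenue with the concave, respectively affine, control-to-state map). Your observation that uniqueness really requires \emph{strict} convexity of $c$ (the standing assumption is only convexity, though $c(a)=\tfrac12 a^2$ in every example) is a fair point that the paper glosses over.
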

Since the optimization problem of the individual firm is concave in $\alpha$, the proof of Lemma \ref{lemma optimal controls MFG} follows standard concavity arguments (see Theorem 5.2 at p.\ 68 in \cite{Yong&Zhou99}), and it is therefore omitted.   

The crucial point in our analysis is  that the game exhibits strategic complementarities; that is, 
increments of the opponents strategies incentivize increments in the player optimal strategy. 
This statement is made rigorous in the following lemma. 
\begin{lemma}\label{lemma brm increasing}
Consider two mean productions paths $m$ and $\bar m$ and define  $X^m$ and  $X^{\bar m}$ as the related optimally controlled trajectories. 
If $m_t \leq \bar m_t$ for any $t \in [0,T]$, then $X^m_t \leq X^{\bar m}_t$ for any $t \in [0,T]$, $\mathbb P$-a.s.
\end{lemma}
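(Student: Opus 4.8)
The plan is to characterize the optimally controlled state $X^m$ via the stochastic maximum principle (or equivalently via the HJB equation), and then to exploit the comparison structure that Assumption \ref{eq assumption supermodularity} was designed to produce. First I would write down the first-order condition for the representative firm's concave problem: since $\alpha\mapsto J(\alpha,m)$ is strictly concave (by (i), (ii), (iv), and convexity of $c$), the unique optimizer $\alpha^m$ from Lemma \ref{lemma optimal controls MFG} is characterized by the coupled forward-backward system consisting of the state SDE \eqref{eq:SDE} and the adjoint BSDE, with the optimal control obtained pointwise by maximizing the Hamiltonian $H(x,a,p,q;m)=b(x,a)p+\sigma(x)q + xP(x,m;\xi)-c(a)$ over $a\in A$. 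The dependence on $m$ enters the Hamiltonian only through the running and terminal revenue term $xP(x,m;\xi)$, and condition (v) says precisely that this term has nonnegative cross-derivative $\partial_x\partial_m(xP)\geq 0$, i.e.\ increasing $m$ raises the marginal revenue of the state. This is the supermodularity that should translate into a larger adjoint process $p$, hence (since $\partial_a b\geq 0$ in the relevant monotone direction and $c$ is convex) a larger optimal control, hence a larger state.

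The key steps, in order, would be: (1) set up the FBSDE characterization of the optimal pair $(X^m,\alpha^m)$ and similarly $(X^{\bar m},\alpha^{\bar m})$; (2) establish a comparison principle for the adjoint processes: show $p^{\bar m}_t\geq p^m_t$ for all $t$, using that $\bar m\geq m$ increases the driver/terminal data of the adjoint BSDE through $\partial_x(xP(x,\bar m;\xi))\geq\partial_x(xP(x,m;\xi))$, invoking a comparison theorem for BSDEs; (3) deduce from the pointwise Hamiltonian maximization that $\alpha^{\bar m}_t\geq \alpha^m_t$ whenever $X^{\bar m}_t\geq X^m_t$ (monotonicity of the argmax in the adjoint variable, using convexity of $c$ and the sign of $\partial_a b$); (4) close the loop with a comparison theorem for the forward SDE \eqref{eq:SDE}: with a larger drift input $\alpha^{\bar m}\geq\alpha^m$ and the same affine diffusion $\sigma$, conclude $X^{\bar m}_t\geq X^m_t$ $\mathbb P$-a.s.\ for all $t$.

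An alternative, perhaps cleaner, route is to bypass the adjoint process and argue directly via the dynamic programming value function: let $v^m(t,x)$ be the value function given $m$, show by a verification/viscosity comparison argument that $m\mapsto v^m$ is such that $\partial_x v^m$ is nondecreasing in $m$ (again the engine is condition (v) together with the affine/concave structure in (i)--(iv)), and then that the optimal feedback control $a^*(t,x)=\argmax_a\{b(x,a)\partial_x v^m + \sigma^2(x)\partial_{xx}v^m/2 - c(a)\}$ is nondecreasing in $m$; substituting this feedback into \eqref{eq:SDE} and applying an SDE comparison theorem yields the claim. Either way, the structural hypotheses in Assumption \ref{eq assumption supermodularity} are exactly what make both the BSDE/value-function monotonicity and the Hamiltonian argmax monotonicity go through.

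The main obstacle I expect is step (2)/(3): propagating the monotonicity through the \emph{coupled} system, since the comparison of the adjoints depends on the comparison of the states and vice versa. A direct application of BSDE comparison is not immediate because the adjoint driver depends on the (a priori unordered) optimal controls; one has to set up the argument so that the two comparisons reinforce each other — either by a Picard-type iteration on the FBSDE that preserves the ordering at every stage, or by using the concavity to reduce the optimal control to a monotone feedback of $(t,x)$ alone (removing the circularity), after which the forward SDE comparison theorem applies cleanly. Getting the diffusion term to cooperate is the reason (i) forces $\sigma$ affine (so the same $\sigma(X_t)\,dW_t$ appears in both equations and the SDE comparison theorem is applicable with a one-sided Lipschitz / monotone drift), and getting the drift to cooperate is the reason (ii) forces $b$ affine or concave-with-monotone-revenue.
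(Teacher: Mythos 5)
Your proposal correctly identifies the structural ingredients (concavity from (i), (ii), (iv); supermodularity from (v); uniqueness of the optimizer from Lemma \ref{lemma optimal controls MFG}), but the route you choose --- comparison of the adjoint BSDEs in the stochastic maximum principle, or of $\partial_x v^m$ in the HJB formulation --- contains a genuine gap at precisely the point you flag as ``the main obstacle,'' and that obstacle is not merely technical. The driver of the adjoint equation contains the term $\partial_x\big(xP(x,m_t;\xi)\big)\big|_{x=X_t}$. Condition (v) makes this term increase when $m$ increases, which is the direction you want; but condition (iv) makes it \emph{decrease} when $x$ increases, so if you have already granted yourself $X^{\bar m}\geq X^m$ the state-dependence of the driver pushes in the \emph{opposite} direction to the $m$-dependence. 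The two comparisons (forward for $X$, backward for $p$) therefore do not reinforce each other in the naive way, and neither a direct BSDE comparison theorem nor an order-preserving Picard iteration on the coupled FBSDE goes through without a substantially new idea. The same issue resurfaces in your HJB variant: $\partial_x v^{\bar m}\geq \partial_x v^m$ is itself a cross-monotonicity statement that does not follow from the standard comparison principle for the value functions. (A smaller fixable point: monotonicity of the \emph{control} in the adjoint variable needs $\partial_a b\geq 0$, which is not assumed; what one should use instead is that $b(x,\hat a(x,p))=\partial_p\max_a\{b(x,a)p-c(a)\}$ is automatically nondecreasing in $p$ by convexity of the max of affine functions.)

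The paper avoids the circularity altogether with a lattice-theoretic argument in the spirit of Topkis (see the proof of Lemma \ref{lemma brm increasing parameters}): given the optimal pairs $(X,\alpha)$ for $m$ and $(\bar X,\bar\alpha)$ for $\bar m$, one builds the meet and join controls $\alpha^\land,\alpha^\lor$ by switching between $\alpha$ and $\bar\alpha$ on the events $\{X_t<\bar X_t\}$ and $\{X_t\geq \bar X_t\}$, so that $X^{\alpha^\land}=X\land\bar X$ and $X^{\alpha^\lor}=X\lor\bar X$ and the costs satisfy $c(\alpha_t)+c(\bar\alpha_t)=c(\alpha^\land_t)+c(\alpha^\lor_t)$ pointwise. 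The supermodularity of $(x,m)\mapsto xP(x,m;\xi)$ from condition (v) then gives the single payoff inequality $0\leq J(\alpha,m)-J(\alpha^\land,m)\leq J(\alpha^\lor,\bar m)-J(\bar\alpha,\bar m)$, so $\alpha^\lor$ is optimal for $\bar m$; uniqueness forces $\alpha^\lor=\bar\alpha$, hence $\bar X=X\lor\bar X$, i.e.\ $X\leq\bar X$. No comparison of adjoints or value-function derivatives is ever needed. I would recommend abandoning the FBSDE route and adopting this direct argument, or else supplying a genuinely new mechanism to break the forward--backward circularity.
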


Under Assumption \ref{eq assumption supermodularity} only, non existence of equilibria is a known issue, as it is discussed in Section 7 in \cite{Lacker15}.
Intuitively, the reason of this is that the representative player's optimal answer might be larger and larger when $m$ becomes large.
Thus, in order to prevent the equilibria to become arbitrarily large, we assume that 
\begin{equation}
    \label{assumption a priori estimates}
    \text{$A = [0,\bar a]$ with $\bar a >0$.}
\end{equation}
This conditions allows us to prove the following result.
\begin{theorem}\label{theorem existence MFGE}
    Under Assumptions \eqref{eq assumption supermodularity} and \eqref{assumption a priori estimates}, there exist a minimal and a maximal MFGE $\underline m$ and $\overline m$; that is,  $\underline m _t \leq m_t \leq \overline m_t$ for any $t \in [0,T]$, for any other MFGE $m$. 
\end{theorem}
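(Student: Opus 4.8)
\textbf{Setup as a fixed point problem.} The plan is to recast the MFGE condition as a fixed point of a map on a suitable complete lattice and apply Tarski's theorem. Define $\Phi(m)_t := \mathbb E[X_t^m]$, where $X^m$ is the optimally controlled trajectory associated with the mean production path $m$, guaranteed by Lemma \ref{lemma optimal controls MFG}. An MFGE is precisely a fixed point of $\Phi$ satisfying the initial condition $m_0 = \int_\R x\,\mu_0(dx)$. The natural domain is the set
$$
\mathcal M := \{ m:[0,T]\to\R \text{ measurable} \ : \ m_0 = \textstyle\int_\R x\,\mu_0(dx), \ \ell_t \le m_t \le L_t \ \forall t\},
$$
endowed with the pointwise (a.e.) partial order, for appropriate lower and upper envelopes $\ell, L$ to be constructed in the next step. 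With this order, $\mathcal M$ is a complete lattice provided the bounds are chosen so that pointwise sups/infs of families in $\mathcal M$ stay in $\mathcal M$.

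\textbf{A priori bounds to get a stable domain.} Here is where assumption \eqref{assumption a priori estimates} enters. Since $A=[0,\bar a]$, the control is bounded, so for any $m$ the drift $b(X_t,\alpha_t)$ is controlled: using that $b$ is affine or concave and $\sigma$ affine (Assumption \ref{eq assumption supermodularity}(i)--(ii)), standard SDE estimates (Gronwall) give deterministic bounds $\ell_t \le \mathbb E[X_t^m] \le L_t$ uniform over all admissible $m$, where $\ell, L$ solve the ODEs obtained by replacing $\alpha$ with its extreme values $0$ and $\bar a$ and taking expectations. Concretely one compares $X^m$ with the solutions driven by $\alpha\equiv 0$ and $\alpha\equiv\bar a$ via the comparison principle for one-dimensional SDEs. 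This shows $\Phi(\mathcal M)\subseteq \mathcal M$, i.e. $\Phi$ maps the lattice into itself, and it is exactly the step that "prevents equilibria from becoming arbitrarily large" as flagged after \eqref{assumption a priori estimates}.

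\textbf{Monotonicity of $\Phi$ and Tarski.} By Lemma \ref{lemma brm increasing}, $m \le \bar m$ pointwise implies $X^m_t \le X^{\bar m}_t$ a.s.\ for all $t$, hence $\mathbb E[X^m_t] \le \mathbb E[X^{\bar m}_t]$, i.e.\ $\Phi$ is order-preserving on $\mathcal M$. Since $\mathcal M$ is a nonempty complete lattice and $\Phi:\mathcal M\to\mathcal M$ is monotone, Tarski's fixed point theorem yields that the set of fixed points of $\Phi$ is itself a nonempty complete lattice; in particular it has a least element $\underline m$ and a greatest element $\overline m$. These are the minimal and maximal MFGE, and by construction any MFGE $m$ lies in $\mathcal M$ and is a fixed point of $\Phi$, so $\underline m_t \le m_t \le \overline m_t$ for all $t$.

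\textbf{Main obstacle.} The delicate point is verifying that $\mathcal M$ with the a.e.-pointwise order is genuinely a complete lattice \emph{and} that the envelopes $\ell, L$ are themselves admissible (i.e.\ measurable, satisfying the initial condition, and preserved under arbitrary pointwise sup/inf). One must be careful that the supremum of an arbitrary — possibly uncountable — family of measurable functions is handled correctly (e.g.\ via essential suprema), and that $\Phi$ interacts well with this notion; checking that $\mathbb E[X^{\cdot}_t]$ respects essential suprema of directed families is the technical heart. The uniform a priori estimate from \eqref{assumption a priori estimates} is what makes the domain bounded and hence the lattice complete, so that step and the lattice-completeness verification are where the real work lies; the monotonicity of $\Phi$ is essentially free from Lemma \ref{lemma brm increasing}.
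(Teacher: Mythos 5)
Your proposal is correct and follows essentially the same route as the paper: a priori bounds from the compact control set via the comparison principle define an order interval of measurable mean-production paths, monotonicity of the best-response map comes from Lemma \ref{lemma brm increasing}, and Tarski's fixed point theorem yields a nonempty complete lattice of equilibria. The lattice-completeness issue you flag is handled in the paper by working with the $\delta_0+dt+\delta_T$-a.e.\ pointwise order, under which the order interval between $m^{\text{\tiny{Min}}}$ and $m^{\text{\tiny{Max}}}$ is a complete lattice in the standard way.
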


\subsection{Comparative statics at equilibrium}
In light of Theorem \ref{theorem existence MFGE}, one can find natural sufficient conditions in order to determine the  equilibrium providing the maximal reward.
\begin{theorem}
    \label{theorem comparative statics first at equilibrium} 
Let $\xi \in E$ and let $\underline m ^\xi, \overline m ^\xi$  the related minimal and maximal MFGE, with associated optimal pairs $(\underline \alpha ^\xi,\underline X ^\xi)$ and $ (\overline \alpha ^{ \xi},\overline X ^\xi)$.
Under Assumptions \eqref{eq assumption supermodularity}, \eqref{assumption a priori estimates}, if the function $m \mapsto xP(x,m,\xi)$ is nondecreasing for all $x\in\R$, then we have a corresponding monotonicity of optimal rewards at equilibrium; that is, 
    $$ 
    \begin{aligned}
    & J(\underline \alpha ^\xi, \underline m ^\xi;\xi) \leq J(\overline \alpha ^\xi, \overline m ^\xi;\xi).
    \end{aligned}
    $$
\end{theorem}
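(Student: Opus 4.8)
The plan is to establish the inequality by a two-step comparison, inserting the intermediate quantity $J(\underline\alpha^\xi,\overline m^\xi;\xi)$, i.e.\ the reward obtained by the representative firm when it uses the minimal-equilibrium control $\underline\alpha^\xi$ but faces the maximal mean-production path $\overline m^\xi$.

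\emph{Step 1 (optimality at the maximal equilibrium).} By Theorem \ref{theorem existence MFGE}, since $\underline m^\xi$ is itself an MFGE, we have $\underline m^\xi_t\le\overline m^\xi_t$ for all $t\in[0,T]$. By construction $\overline\alpha^\xi$ is the (unique, by Lemma \ref{lemma optimal controls MFG}) optimal response to $\overline m^\xi$, hence $J(\overline\alpha^\xi,\overline m^\xi;\xi)\ge J(\alpha,\overline m^\xi;\xi)$ for every $\alpha\in\mathcal A$; choosing $\alpha=\underline\alpha^\xi\in\mathcal A$ gives
\[
J(\overline\alpha^\xi,\overline m^\xi;\xi)\ \ge\ J(\underline\alpha^\xi,\overline m^\xi;\xi).
\]

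\emph{Step 2 (monotonicity of the reward in $m$ at a fixed control).} The key observation is that the state dynamics \eqref{eq:SDE} do not depend on the mean-production path, so the trajectory $\underline X^\xi=X^{\underline\alpha^\xi}$ produced by the fixed control $\underline\alpha^\xi$ is the same in $J(\underline\alpha^\xi,\overline m^\xi;\xi)$ and in $J(\underline\alpha^\xi,\underline m^\xi;\xi)$. Consequently the only $m$-dependence left sits in the running and terminal revenue terms $X_tP(X_t,m_t;\xi)$, and
\[
J(\underline\alpha^\xi,\overline m^\xi;\xi)-J(\underline\alpha^\xi,\underline m^\xi;\xi)=\mathbb E\Big[\int_0^T e^{-\rho t}\,g_t\,dt+e^{-\rho T}g_T\Big],
\]
where $g_t:=\underline X^\xi_t\big(P(\underline X^\xi_t,\overline m^\xi_t;\xi)-P(\underline X^\xi_t,\underline m^\xi_t;\xi)\big)$. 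Since $m\mapsto xP(x,m;\xi)$ is assumed nondecreasing and $\underline m^\xi_t\le\overline m^\xi_t$, we get $g_t\ge 0$ pointwise, $\mathbb P$-a.s., so the right-hand side is nonnegative and $J(\underline\alpha^\xi,\overline m^\xi;\xi)\ge J(\underline\alpha^\xi,\underline m^\xi;\xi)$. Chaining this with Step 1 yields the claim.

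The argument is essentially soft and I do not expect a serious obstacle; the only point requiring (routine) care is the integrability justification needed to write the difference of the two payoffs as the single expectation of $g$ above. This is handled by the standard moment estimates for the SDE \eqref{eq:SDE} together with the square-integrability of admissible controls and of $x_0$ and the continuity/growth of $b,\sigma,c,P$ — exactly the ingredients already underlying the well-posedness of the functional $J$ itself.
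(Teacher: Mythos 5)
Your proof is correct and follows essentially the same route the paper takes: the paper does not spell out a separate proof of this theorem, but its argument for the analogous claims (inequality \eqref{eq J leq J Example 1} in the comparative statics for Example \ref{Example Mean reverting dynamics and linear demand function}, and part (iii) of Theorem \ref{theorem comparative statics at equilibrium}) is exactly your two-step chain $J(\underline\alpha^\xi,\underline m^\xi;\xi)\le J(\underline\alpha^\xi,\overline m^\xi;\xi)\le J(\overline\alpha^\xi,\overline m^\xi;\xi)$, using that for a fixed control the state trajectory is unaffected by $m$ so monotonicity of $x\mapsto xP(x,m;\xi)$ in $m$ gives monotonicity of $J(\alpha,\cdot;\xi)$, and then optimality of $\overline\alpha^\xi$ for $\overline m^\xi$.
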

We next address the comparative statics  of the equilibria with respect to the parameter $\xi$, varying
in the set of parameters $E \subset \R$. 
In order to do that, we introduce the following assumption.
\begin{assumption}\label{eq assumption supermodularity parameter}
We have
\begin{equation}
 \partial_x \partial_\xi (xP(x,m;\xi))  = \partial_\xi P(x,m;\xi) + x \partial_x \partial_\xi P (x,m;\xi) \geq 0 ,  \ \ \ \forall (x,m;\xi) \in \mathbb R^2 \times E.
\end{equation}

\end{assumption}
\begin{theorem}\label{theorem comparative statics at equilibrium} 
Take $\xi, \bar \xi \in E$ and let $\underline m ^\xi, \overline m ^\xi$ and $\underline m ^{\bar \xi}, \overline m ^{\bar \xi}$ the related minimal and maximal MFGE, with associated optimal pairs $(\underline \alpha ^\xi,\underline X ^\xi), (\overline \alpha ^{ \xi},\overline X ^\xi), (\underline \alpha ^{\bar \xi}, \underline X ^{\bar \xi}),$ and $(\overline \alpha ^{\bar \xi}, \overline X ^{\bar \xi})$.

Under Assumption
\ref{eq assumption supermodularity parameter}, if $\xi \leq \bar \xi$, then: 
\begin{itemize}
    \item[(i)] $\underline m ^\xi \leq \underline m ^{\bar \xi}$ and $\overline m ^\xi \leq \overline m ^{\bar \xi}$;
    \item[(ii)] $\underline X^\xi \leq \underline X ^{\bar \xi}$ and $\overline X^\xi \leq \overline X^{\bar \xi}$,   $\mathbb P$-a.s.;
\item[(iii)]\label{theorem comparative statics at equilibrium.claim 3} If, further, $(m,\xi) \mapsto xP(x,m,\xi)$ is nondecreasing in the variables $(m,\xi)$, then we have monotonicity of optimal rewards at equilibrium; that is, 
    $$ 
    \begin{aligned}
    & J(\underline \alpha ^\xi, \underline m ^\xi;\xi) \leq J(\overline \alpha ^\xi, \overline m ^\xi;\xi) \leq J ( \overline \alpha ^{\bar \xi}, \overline m ^{\bar \xi}; \bar \xi), \\ 
    & J(\underline \alpha ^\xi, \underline m ^\xi;\xi) \leq J(\underline \alpha ^{\bar \xi}, \underline m ^{\bar \xi};{\bar \xi}) \leq J ( \overline \alpha ^{\bar \xi}, \overline m ^{\bar \xi}; \bar \xi).
    \end{aligned}
    $$
\end{itemize}
\end{theorem}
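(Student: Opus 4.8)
The proof rests on a parametrised strengthening of Lemma~\ref{lemma brm increasing}, after which parts (i)--(iii) follow from Tarski's fixed point theorem and elementary monotonicity of the payoff. \emph{Step 1 (joint monotonicity of the optimal state in $(m,\xi)$).} I would first show that if $m_t\le\bar m_t$ for all $t\in[0,T]$ \emph{and} $\xi\le\bar\xi$, then the optimally controlled trajectories satisfy $X^{m,\xi}_t\le X^{\bar m,\bar\xi}_t$ for all $t$, $\mathbb P$-a.s., where $\alpha^{m,\xi}:=\argmax_{\alpha\in\mathcal A}J(\alpha,m;\xi)$ (Lemma~\ref{lemma optimal controls MFG}) and $X^{m,\xi}$ is the associated state. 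This is obtained by concatenating two one-sided comparisons: $X^{m,\xi}\le X^{\bar m,\xi}$ is exactly Lemma~\ref{lemma brm increasing}, while $X^{\bar m,\xi}\le X^{\bar m,\bar\xi}$ is proved by the very same argument with $\xi$ in the role that $m$ plays there. Indeed, in the individual optimisation $m$ and $\xi$ enter only through the term $xP(x,m;\xi)$, and the sole structural input of the monotonicity argument in $m$ is the sign condition $\partial_x\partial_m(xP)\ge0$ of Assumption~\ref{eq assumption supermodularity}(v); this is now replaced by $\partial_x\partial_\xi(xP)\ge0$, i.e.\ exactly Assumption~\ref{eq assumption supermodularity parameter}, the remaining ingredients (Assumption~\ref{eq assumption supermodularity}(i)--(iv) and $P\in C^2(\mathbb R^3)$) being untouched. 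Consequently the best-reply map $\Psi^\xi(m)_t:=\mathbb E[X^{m,\xi}_t]$ is nondecreasing in $m$ and satisfies $\Psi^\xi(m)\le\Psi^{\bar\xi}(m)$ for every $m$ whenever $\xi\le\bar\xi$. Moreover $\Psi^\xi$ is well defined and order-preserving on the complete lattice of bounded measurable paths underlying Theorem~\ref{theorem existence MFGE}; since the dynamics~\eqref{eq:SDE} does not involve $m$ or $\xi$ and $\alpha^{m,\xi}$ is valued in $A=[0,\bar a]$, a single such lattice $\mathcal M$ works for both $\xi$ and $\bar\xi$ and contains all four equilibria, and $m\in\mathcal M$ is an MFGE at level $\xi$ if and only if $\Psi^\xi(m)=m$ (the constraint $m_0=\int_{\mathbb R} x\,\mu_0(dx)$ being automatic since $X_0=x_0$).

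\emph{Step 2 (parts (i) and (ii)).} By Tarski's theorem, $\underline m^\xi$ is the least pre-fixed point and $\overline m^\xi$ the greatest post-fixed point of the increasing map $\Psi^\xi$ on $\mathcal M$, and similarly at $\bar\xi$. From $\Psi^{\bar\xi}(\underline m^{\bar\xi})=\underline m^{\bar\xi}$ and $\Psi^\xi\le\Psi^{\bar\xi}$ one gets $\Psi^\xi(\underline m^{\bar\xi})\le\underline m^{\bar\xi}$, so $\underline m^{\bar\xi}$ is a pre-fixed point of $\Psi^\xi$ and hence $\underline m^\xi\le\underline m^{\bar\xi}$; symmetrically, $\Psi^\xi(\overline m^{\xi})=\overline m^{\xi}$ gives $\overline m^{\xi}\le\Psi^{\bar\xi}(\overline m^{\xi})$, so $\overline m^\xi$ is a post-fixed point of $\Psi^{\bar\xi}$ and hence $\overline m^\xi\le\overline m^{\bar\xi}$; this is (i). Part (ii) then follows by feeding these inequalities into the joint monotonicity of Step~1: $\underline X^\xi=X^{\underline m^\xi,\xi}\le X^{\underline m^{\bar\xi},\bar\xi}=\underline X^{\bar\xi}$ and likewise $\overline X^\xi\le\overline X^{\bar\xi}$, $\mathbb P$-a.s.

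\emph{Step 3 (part (iii)).} The inner inequalities $J(\underline\alpha^\xi,\underline m^\xi;\xi)\le J(\overline\alpha^\xi,\overline m^\xi;\xi)$ and $J(\underline\alpha^{\bar\xi},\underline m^{\bar\xi};\bar\xi)\le J(\overline\alpha^{\bar\xi},\overline m^{\bar\xi};\bar\xi)$ are Theorem~\ref{theorem comparative statics first at equilibrium} applied at $\xi$ and at $\bar\xi$ (its hypothesis, $m\mapsto xP(x,m,\xi)$ nondecreasing, being contained in the monotonicity assumption of (iii)). For the outer inequalities I would use that the state equation~\eqref{eq:SDE} is independent of $(m,\xi)$: for a fixed $\alpha\in\mathcal A$ the trajectory $X^\alpha$ is the same for all $(m,\xi)$, so $J(\alpha,m;\xi)$ depends on $(m,\xi)$ only through $x\mapsto xP(x,m;\xi)$ evaluated along $X^\alpha$; as this map is nondecreasing in $(m,\xi)$ and $\overline m^\xi\le\overline m^{\bar\xi}$, $\xi\le\bar\xi$ by (i), we get $J(\alpha,\overline m^\xi;\xi)\le J(\alpha,\overline m^{\bar\xi};\bar\xi)$ for all $\alpha$. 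Choosing $\alpha=\overline\alpha^\xi$ and using optimality of $\overline\alpha^{\bar\xi}$ for $(\overline m^{\bar\xi},\bar\xi)$ gives $J(\overline\alpha^\xi,\overline m^\xi;\xi)\le J(\overline\alpha^\xi,\overline m^{\bar\xi};\bar\xi)\le J(\overline\alpha^{\bar\xi},\overline m^{\bar\xi};\bar\xi)$; the same reasoning with $\underline m$ in place of $\overline m$ gives $J(\underline\alpha^\xi,\underline m^\xi;\xi)\le J(\underline\alpha^{\bar\xi},\underline m^{\bar\xi};\bar\xi)$. Concatenating the inner and outer inequalities yields the two displayed chains.

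\emph{Main obstacle.} The only step that is not pure bookkeeping is Step~1: one must check that the proof of Lemma~\ref{lemma brm increasing} transfers verbatim with $\xi$ replacing $m$ — that is, that no estimate there uses a feature of $m$ beyond the sign of $\partial_x\partial_m(xP)$ — and that a single bounded invariant lattice can be used for both parameter values; everything downstream is lattice theory and monotonicity of $J$ in its frozen-control form.
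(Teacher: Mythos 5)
Your proposal is correct, and it reaches the same key intermediate fact as the paper --- the joint monotonicity of the best-reply map in $(m,\xi)$, which is exactly Lemma~\ref{lemma brm increasing parameters}(2) --- but it gets both that fact and claim (i) by somewhat different routes. For the joint monotonicity, the paper runs the lattice argument (the $\alpha^\land,\alpha^\lor$ construction and the supermodularity inequalities) once, moving from $(m,\xi)$ to $(\bar m,\bar\xi)$ simultaneously, using Assumption~\ref{eq assumption supermodularity}(v) and Assumption~\ref{eq assumption supermodularity parameter} in the same chain of inequalities; you instead concatenate two one-sided comparisons ($m$ first, then $\xi$), correctly observing that the only structural input that changes is the sign of the relevant mixed derivative of $xP$. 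Both are sound; the paper's one-shot version is marginally more economical. For claim (i), the paper argues constructively: it runs the Banach iterations $m^{n+1,\xi}=\Lambda^\xi(m^{n,\xi})$ and $m^{n+1,\bar\xi}=\Lambda^{\bar\xi}(m^{n,\bar\xi})$ from the common starting point $m^{\text{\tiny{Min}}}$ (resp.\ $m^{\text{\tiny{Max}}}$), shows $m^{n,\xi}\le m^{n,\bar\xi}$ by induction, and passes to the limit using Theorem~\ref{theorem MFGE convergence}. You instead invoke the Tarski characterization of the extremal fixed points as extremal pre-/post-fixed points, so that $\Psi^\xi\le\Psi^{\bar\xi}$ immediately yields the ordering of the minimal and maximal equilibria. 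Your route is shorter and avoids any reliance on the convergence theorem (in particular on the stability/maximum-principle step needed to show the iteration limit is an equilibrium); the paper's route is consistent with its computational emphasis and reuses machinery it needs anyway. Finally, note that the paper's appendix proof only writes out claim (i); your Steps~2--3 for claims (ii) and (iii) supply the arguments the paper leaves implicit, and they are the natural ones (claim (ii) from the joint monotonicity applied at the ordered equilibria, claim (iii) from monotonicity of $J(\alpha,\cdot;\cdot)$ for frozen $\alpha$ plus optimality, mirroring the computation the paper carries out explicitly for Example~\ref{Example Mean reverting dynamics and linear demand function}).
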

In the next two subsection, we discuss comparative statics for the examples of Subsection \ref{subsection examples}. Indeed, these examples require some extra arguments, since the assumptions on the monotonicity of $xP(x,m,\xi)$ in $m$ or $\xi$  are not directly satisfied. 

\subsubsection{Comparative statics for Example \ref{Example Mean reverting dynamics and linear demand function}}
We begin by observing that, since the production levels are Gaussian, the production can become negative and we cannot expect monotonicity of $xP(x,m,\xi)$ in $m$ or $\xi$.
However, simple observations will allows us to recover some comparative statics.

Notice that, in the specification of the model as in Example \ref{Example Mean reverting dynamics and linear demand function}, for $\alpha \in \mathcal A$, the function $E_t:= \mathbb E [ X_t ]$ satisfy the ordinary differential equation
$$
dE_t = (\mathbb E [\alpha_t] - \delta E_t)dt, \quad E_0 =m_0. 
$$
Assuming $m_0\geq 0$, since $A \subset [0,\infty)$ we have $\mathbb E [\alpha_t] \geq 0$ so that $E_t \geq0 $ for any $\alpha \in \mathcal A$. 
Thus, at equilibrium we also have $m_t \geq 0$.
Hence, for generic $\xi, \bar \xi \in E$ and  $ m , \bar m$ with $0 \leq\xi \leq \bar \xi$ and $0\leq m_t \leq \bar m _t$, we obtain
\begin{align*}   
    J(\alpha, m;\xi) =& \int_0^T e^{-\rho t} \big( D E_t + \xi m_t E_t - \mathbb E [ (X_t)^2 + \alpha^2_t /2 ] \big) dt \\
    & \quad + e^{-\rho T} \big( D ET + \xi m_T E_T - \mathbb E [ (X_T)^2 ] ) \\
     \leq&  \int_0^T e^{-\rho t} \big( D E_t + \bar \xi \bar m _t E_t - \mathbb E [ (X_t)^2 + \alpha^2_t /2 ] \big) dt \\
     &\quad  + e^{-\rho T} \big( D E_T + \bar \xi \bar m_T E_T - \mathbb E [ (X_T)^2 ] ) \\
    =& J(\alpha,\bar m; \bar \xi),
\end{align*}
which in turn implies
\begin{equation}
     \label{eq J leq J Example 1}
     \inf_{\alpha \in \mathcal A} J(\alpha, m;\xi) \leq \inf_{\alpha \in \mathcal A} J(\alpha, \bar m; \bar \xi).
\end{equation}
From the latter inequality, the same conclusions as in Theorems \ref{theorem comparative statics first at equilibrium} and \ref{theorem comparative statics at equilibrium} hold.

Further, we can derive comparative statics result for the mean equilibrium prices.
In particular, take $\xi, \bar \xi \in E$ and let $\underline m ^\xi, \overline m ^\xi$ and $\underline m ^{\bar \xi}, \overline m ^{\bar \xi}$ the related minimal and maximal MFGE, with associated optimal production $\underline X ^\xi, \overline X ^\xi,  \underline X ^{\bar \xi},$ and $\overline X ^{\bar \xi}$.
From the equilibrium condition $\underline m ^\xi_t = \mathbb E [ \underline X ^\xi_t]$, we can write the related mean equilibrium price as
$$
 \mathbb E [ P(\underline X _t^\xi, \underline m _t^\xi;\xi) ] = D + \xi \underline m ^\xi_t - \underline m ^\xi_t = D + (\xi -1)\underline m ^\xi_t, \quad \text{for any $t \in [0,T]$,}
$$
and similarly for $ \mathbb E [ P(\overline X _t^\xi, \overline m _t^\xi;\xi)] , \mathbb E [ P ( \underline X _t^{\bar \xi}, \underline m _t^{\bar \xi}; \bar \xi)] $ and $ \mathbb E [ P ( \overline X _t^{\bar \xi}, \overline m _t^{\bar \xi}; \bar \xi)]$. 
Therefore, for $\xi \geq 1$, we obtain
$$
\mathbb E [ P(\underline X _t^\xi, \underline m _t^\xi;\xi) ] \leq \mathbb E [ P(\overline X _t^\xi, \overline m _t^\xi;\xi) ], \quad \text{for any $t \in [0,T]$, }
$$
giving a comparative statics of the mean equilibrium prices for fixed $\xi$.
Finally, for $\xi \leq \bar \xi$, with  $\bar \xi \geq 1$, we obtain
$$
\begin{aligned}
   & \mathbb E [ P(\underline X _t^\xi, \underline m _t^\xi;\xi) ] \leq \mathbb E [ P ( \underline X _t^{\bar \xi}, \underline m _t^{\bar \xi}; \bar \xi)]
     \leq \mathbb E [ P ( \overline X _t^{\bar \xi}, \overline m _t^{\bar \xi}; \bar \xi)], 
     \quad \text{for any $t \in [0,T]$,}
\end{aligned}   
$$
and, if also $\xi \geq 1$, we have
$$
\begin{aligned}
   & \mathbb E [ P(\underline X _t^\xi, \underline m _t^\xi;\xi) ] \leq 
    \mathbb E [ P(\overline X _t^\xi, \overline m _t^\xi;\xi) ] \leq \mathbb E [ P ( \overline X _t^{\bar \xi}, \overline m _t^{\bar \xi}; \bar \xi)],
    \quad \text{for any $t \in [0,T]$,}
\end{aligned}
$$
which gives us the desired monotonicity.

\subsubsection{Comparative statics for Examples \ref{Example Mean reverting log-dynamics and isoelastic inverse demand} and \ref{Example Geometric dynamics and isoelastic inverse demand} }
In both examples, the assumptions of Theorem \ref{theorem comparative statics first at equilibrium} are satisfied, hence the maximal equilibrium is the one associated to the maximal reward. 

We now investigate the a comparative statics for the equilibrium prices for Example \ref{Example Mean reverting log-dynamics and isoelastic inverse demand}.
In particular, 
for generic $\xi \in E$,  $ m $  and $X$ such that $m_t = \exp( \mathbb E [\log X_t ] )$, 
taking the logarithm of the price we obtain
\begin{equation*}
    \mathbb E [ \log P( X _t,  m _t;\xi) ] = \log D+ \xi \log \gamma + \xi \log m_t - \zeta \mathbb E [ \log X_t] = \log D  + \xi \log \gamma  + (\xi - \zeta) \log m_t. 
\end{equation*}
Therefore, for $\xi \geq \zeta$, we have monotone log-price; i.e., 
\begin{equation*}
    \mathbb E [ \log P( \underline X ^\xi _t,  \underline m _t;\xi) ] \leq  \mathbb E [ \log P( \overline X ^\xi _t,  \overline m _t;\xi) ], 
\end{equation*}
for the minimal and maximal MFGEs  $\underline m ^\xi $, $ \overline m ^\xi$  with associated optimal productions $\underline X ^\xi$, $ \overline X ^\xi$.

\begin{remark}
    We underline that the comparative statics as in Theorem \ref{theorem comparative statics at equilibrium} hold even if the (scalar) parameter $\xi$ is replaced by a time dependent function or a stochastic process $\xi :[0,T] \times \Omega \to \mathbb R$.
    In particular, one can consider the case in which a social planner or a exogenous agent can choose the parameter $\xi$ (or some of the parameters of a dynamics for $\xi$) in order to optimize some quantities at equilibria.
    This type of problems appear in contract theory and Stakelberg games involving mean field games (see e.g.\ the recent \cite{elie.Hubert.Mastrolia.Possamai.2021mean} or \cite{guo.hu.zhang.2022optimization} and the references therein) and have received an increasing attention in recent years. 
    For example, a social planner could be interested in keeping the maximal mean price  $\mathbb E [ P(\overline X _t^\xi, \overline m _t^\xi;\xi) ]$ close to a certain target, while incurring in some costs for modifying $\xi$.
    Clearly, comparative statics at equilibria can reveal to be extremely useful in this framework.
\end{remark}

\subsection{Algorithms}
We next address the problem of constructing the MFGE.
Given an imput $m^1$, define:
$$
\begin{aligned}
&\text{\textbf{Algorithm 1 [Banach iteration]:} }\quad  \text{$m_t ^{n+1} := \mathbb E [ X_t^{m^n} ]$;} \\
&\text{\textbf{Algorithm 2 [Fictitious Play]:}} \quad \text{ $\hat m ^n_t := \frac1n \sum_{k=1}^n \nu_t^k$, \  $\nu_t ^{n+1} := \mathbb E [   X_t^{\hat m ^n}  ]$ and $\nu^1 := m^1$.}
\end{aligned}
$$

In order to properly initialize the algorithms, we first discuss a priori bounds for the MFGE according to the two cases in Assumption \eqref{assumption a priori estimates}. 
On the one hand, if $A = [0,\bar a]$, we can define two trajectories  $\underline X$ and $\overline X$ as the solutions to the SDEs
\begin{align*}
    d \underline X_t &=  \underline b (\underline X_t) dt + \sigma(\underline X_t) dW_t, \quad \underline X_0 = x_0, \quad \text{with } \underline b (x) := \min_{a \in A} b(x,a), \\
    d \overline X _t &=\overline b (\overline X _t ) dt + \sigma (\overline X_t) dW_t, \quad \overline X_0 = x_0,\quad \text{with } \overline b (x) := \max_{a \in A} b(x,a),
\end{align*}
and via comparison principle we obtain
\begin{equation*}
    \underline X _t \leq X^\alpha_t \leq \overline X _t, \quad \text{for any  $\alpha$.} 
\end{equation*}
Thus, we can define 
\begin{equation}\label{eq definition m min max}
    m_t^{\text{\tiny{Min}}} := 
    \mathbb E [ \underline X_t  ]
    \quad \text{and} \quad 
     m_t^{\text{\tiny{Max}}} := 
    \mathbb E [ \overline X_t ],
\end{equation}
in order to obtain  
\begin{equation}
    \label{equation controlled equation in interval}
     m_t^{\text{\tiny{Min}}} \leq \mathbb E [X^{\alpha}_t] \leq  m_t^{\text{\tiny{Max}}}, \quad \text{for any  $\alpha$.}  
\end{equation}

Finally, we can state the following convergence result.
\begin{theorem}\label{theorem MFGE convergence}
Under Assumptions \eqref{eq assumption supermodularity} and \eqref{assumption a priori estimates}, the following statements hold true:
\begin{enumerate}
    \item\label{theorem MFGE convergence.banach} Convergence of Banach iteration: 
    \begin{enumerate}
        \item If $m^1 = m^{\text{\tiny{Min}}}$, then $m^n_t \leq m_t^{n+1}$ for any $t \in [0,T]$, $n\geq 1$, and $\lim_n m^n_t = \underline m _t$, for any $t \in [0,T]$.
        \item If $m^1 = m^{\text{\tiny{Max}}}$, then $m^n_t \geq m_t^{n+1}$ for any $t \in [0,T]$, $n\geq 1$, and $\lim_n m^n_t = \overline m _t$, for any $t \in [0,T]$.
    \end{enumerate}
    \item\label{theorem MFGE convergence.fictitious} Convergence of Fictitious Play:
    \begin{enumerate}
    \item If $m^1 = m^{\text{\tiny{Min}}}$, then $\hat m ^n_t \leq \hat m _t^{n+1}$ for any $t \in [0,T]$, $n\geq 1$, and $\lim_n \hat m ^n_t = \underline m _t$, for any $t \in [0,T]$.
    \item If $m^1 = m^{\text{\tiny{Max}}}$, then $\hat m ^n_t \geq \hat m _t^{n+1}$ for any $t \in [0,T]$, $n\geq 1$, and $\lim_n \hat m ^n_t = \overline m _t$, for any $t \in [0,T]$.
    \end{enumerate}
\end{enumerate}
\end{theorem}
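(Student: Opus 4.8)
The plan is to read both algorithms as Picard iterations of the \emph{best-response map}
$$
\Phi(m)_t := \mathbb{E}\big[X_t^{m}\big], \qquad t\in[0,T],
$$
acting on the set $\mathcal L$ of measurable functions $m:[0,T]\to\R$ with $m^{\text{\tiny{Min}}}_t\le m_t\le m^{\text{\tiny{Max}}}_t$ for every $t$, equipped with the pointwise partial order. By \eqref{equation controlled equation in interval} the map $\Phi$ sends $\mathcal L$ into $\mathcal L$; by Lemma \ref{lemma brm increasing} it is nondecreasing; $(\mathcal L,\le)$ is a complete lattice with least element $m^{\text{\tiny{Min}}}$ and greatest element $m^{\text{\tiny{Max}}}$; and its fixed points are precisely the MFGEs, among which $\underline m$ and $\overline m$ from Theorem \ref{theorem existence MFGE} are the minimal and the maximal one. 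I will also use that $m^{\text{\tiny{Min}}}$ and $m^{\text{\tiny{Max}}}$ are bounded on $[0,T]$, being expectations of continuous, uniformly integrable processes; hence every monotone sequence in $\mathcal L$ converges pointwise to an element of $\mathcal L$, and, by dominated convergence, also in $L^1(0,T)$.

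\emph{Banach iteration.} Take $m^1=m^{\text{\tiny{Min}}}$. Since $m^1$ is the bottom of $\mathcal L$, $m^1\le\Phi(m^1)=m^2$, and monotonicity of $\Phi$ gives inductively $m^n\le m^{n+1}$ for all $n$; being bounded above by $m^{\text{\tiny{Max}}}$, the sequence converges pointwise (and in $L^1$) to some $m^\infty\in\mathcal L$. A parallel induction, using $m^1=m^{\text{\tiny{Min}}}\le\underline m$, $\Phi$ nondecreasing and $\Phi(\underline m)=\underline m$, yields $m^n\le\underline m$ for every $n$, hence $m^\infty\le\underline m$. It remains to show that $m^\infty$ is a fixed point, and this is the only step that is not pure bookkeeping: it requires the \emph{stability of the best-response map}, namely $\Phi(m^n)\to\Phi(m^\infty)$ pointwise in $t$ when $m^n\to m^\infty$ in $L^1(0,T)$ with a uniform bound. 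I would derive this from the strict concavity of $\alpha\mapsto J(\alpha,m)$ underlying Lemma \ref{lemma optimal controls MFG}, together with standard stability estimates for the state SDE \eqref{eq:SDE} (equivalently, stability of the adjoint FBSDE characterising $\alpha^{m}$): a uniform bound forces $\alpha^{m^n}\to\alpha^{m^\infty}$ and then $X^{m^n}_t\to X^{m^\infty}_t$ in $L^1(\mathbb{P})$ for each $t$. Granting this, $m^\infty=\lim_n m^{n+1}=\lim_n\Phi(m^n)=\Phi(m^\infty)$, so $m^\infty$ is an MFGE, whence $m^\infty\ge\underline m$ by minimality and therefore $m^\infty=\underline m$. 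The case $m^1=m^{\text{\tiny{Max}}}$ is symmetric: $m^1\ge\Phi(m^1)$, the iterates decrease, stay above $\overline m$, and converge to $\overline m$.

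\emph{Fictitious play.} Take $\nu^1=m^1=m^{\text{\tiny{Min}}}$ and $\nu^{n+1}=\Phi(\hat m^n)$ with $\hat m^n=\tfrac1n\sum_{k=1}^n\nu^k$. By induction one checks that $\nu^1\le\dots\le\nu^n$ and $\hat m^1\le\dots\le\hat m^n$ for every $n$: from the identity $\hat m^{n+1}-\hat m^n=\tfrac1{n+1}\big(\nu^{n+1}-\hat m^n\big)$ and the fact that $\hat m^n$ is an average of $\nu^1,\dots,\nu^n$ (all $\le\nu^{n+1}$ by the inductive hypothesis, using $\nu^n=\Phi(\hat m^{n-1})\le\Phi(\hat m^n)=\nu^{n+1}$) we get $\hat m^n\le\hat m^{n+1}$, and then monotonicity of $\Phi$ propagates the order of the $\nu$'s; the base case uses only $\Phi(m^{\text{\tiny{Min}}})\ge m^{\text{\tiny{Min}}}$. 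Both sequences lie in $\mathcal L$, so they converge pointwise to limits $\nu^\infty,\hat m^\infty\in\mathcal L$, and the Cesàro property gives $\hat m^\infty=\nu^\infty$. Passing to the limit with the stability of $\Phi$ as above, $\nu^\infty=\lim_n\nu^{n+1}=\lim_n\Phi(\hat m^n)=\Phi(\hat m^\infty)=\Phi(\nu^\infty)$, so $\hat m^\infty$ is an MFGE; and a further induction ($\nu^1\le\underline m$, and if $\hat m^n\le\underline m$ then $\nu^{n+1}=\Phi(\hat m^n)\le\underline m$, hence $\hat m^{n+1}$, an average of elements $\le\underline m$, is $\le\underline m$) yields $\hat m^n\le\underline m$, so $\hat m^\infty=\underline m$ by minimality. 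The case $m^1=m^{\text{\tiny{Max}}}$ is again symmetric. The only substantial obstacle is the stability/continuity of $\Phi$ along these monotone, uniformly bounded sequences; everything else reduces to lattice and Cesàro manipulations resting on Lemma \ref{lemma brm increasing} and the a priori bounds \eqref{equation controlled equation in interval}.
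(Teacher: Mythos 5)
Your proposal is correct and follows essentially the same route as the paper: monotonicity of the best-response map from Lemma \ref{lemma brm increasing} plus the a priori bounds \eqref{equation controlled equation in interval} give the monotone iterates, and the limit is identified as the minimal (resp.\ maximal) MFGE by a sandwich/minimality argument. The one step you flag as substantial --- stability of the best response along the monotone sequence --- is treated at the same level of detail in the paper, which likewise only sketches it by appealing to stability of the state equation and the stochastic maximum principle with a reference to \cite{dianetti2022strong}.
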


\begin{remark}
Notice that, even after showing the convergence of the Banach iteration without necessarily relying on a contraction theorem, its practical implementation could give rise to numerical issues. 
Indeed, since the whole convergence hinges on the monotonicity of the iteration, a small numerical error  when computing the updated distribution could destroy this property, thus preventing the convergence.
Analogous issues could arise for the Fictitious Play algorithm.
However, we verify in the next Section an accurate performance of the numerical schemes related to the Banach iteration in the Examples of Subsection \ref{subsection examples}.
\end{remark}

\section{Numerical analysis}
    \label{section numerical analysis}
In this section we illustrate numerical experiments for some of the examples in Subsection \ref{subsection examples}, choosing $\mu_0 := \delta_{x_0}$ (the Dirac's delta at $x_0$) and $x_0 \in \R$ deterministic.
In particular, since the two Examples \ref{example hold mr log geo} and \ref{Example Mean reverting log-dynamics and isoelastic inverse demand} (resp.\ Examples \ref{Example hold GBM and isoelastic inverse demand} and \ref{Example Geometric dynamics and isoelastic inverse demand}) give qualitatively similar numerical results, we will discuss only the numerics for Examples \ref{Example Mean reverting log-dynamics and isoelastic inverse demand} and \ref{Example Geometric dynamics and isoelastic inverse demand}.

The fixed-point iterations in Theorem \ref{theorem MFGE convergence} have a natural numerical counterpart.
More in detail, in each iteration from $n$ to $n+1$ our implementation of such algorithms goes along the following steps: 
\begin{enumerate} 
\item Start with the initial guess $m$. 
\item Solve the maximization problem for the given $m$.
We implement this step using the dynamic programming approach. 
In particular, for the given  $(m_t)_{t \in [0,T]}$ we can write the Hamilton-Jacobi-Bellman (HJB, in short) equation associated to the representative player optimal control problem: 
\begin{equation*}
\rho V^m= \max_{a \in A} \Big(  x P(x,m;\xi)  - c(a) + b(x,a) \partial_x V^m  +\dfrac{1}{2} (\sigma (x))^2  \partial_{xx} V^m + {\partial_t V^m} \Big), 
\end{equation*}
for $(t,x) \in (0,T) \times \R$, with boundary condition $V^m(T,x) =  e^{-\rho T} x P(x,m;\xi) $ for any $x \in \R$. 
Numerically, we solve a discretized HJB equation backward in time on a non-uniform grid over the domain of $X$ using a standard finite-difference method. The numerical algorithm follows \cite{HACT}, who discuss how discretization scheme satisfies the monotonicity condition required for convergence \cite{barles1991convergence}.
For an interior solution, the first order conditions in the variable $a$ allows to find a feedback 
\begin{equation*}
    \hat \alpha^m (t,x)  = \argmax_{a \in A}  \big( b(x,a) \partial_x V^m (t,x) - c(a) \big) .
\end{equation*} 
Upon a verification theorem (to be shown on a case by case basis), $ \hat \alpha^m (t,x)$ is actually the optimal feedback control.
\item  Compute the distribution of $X^{m}$ by using the infinitesimal generator $\mathcal{L}^m_t$. For a one dimensional diffusion as in our case, we have 
\begin{equation*}
    \mathcal{L}^m_t f (t,x) = b(x, \hat \alpha^m (t,x)) \partial_x f (t,x)  + \dfrac{1}{2} (\sigma(x))^2 \partial_{xx} f(t,x), 
\end{equation*}
The evolution of the distribution $g_t$ is given by the associated Kolmogorov Forward equation
\begin{equation*}
    \partial_t g = \mathcal{L}^{m,*}_t g, \quad g(0) = g_0,
\end{equation*}
where $\mathcal{L}^{m,*}_t$ is the adjoint operator of $\mathcal{L}^m_t$, 
\begin{equation*}
    \mathcal{L}^{m,*}_t g = - \partial_x ( b(x, \hat \alpha^m (t,x)) g) + \partial_{xx} ( \dfrac{1}{2} (\sigma(x))^2 g).
\end{equation*}
\item Update $m^{n+1}$ according to algorithm \ref{theorem MFGE convergence.banach} or \ref{theorem MFGE convergence.fictitious} using the distribution $g_t^n$. Thus, set
$$ m_t^{n+1} := \exp \Big(  \int_0^\infty  \log x \,\,d g_t^n (x) \Big), $$
since we are using the equilibrium condition as in \eqref{eq def MFG equilibrium in the geometric case}.
\item Stop if $\sup_{t \in [0,T]} | m_t^{n+1} - m_t^{n} | < \varepsilon$, otherwise go back to (1).    
\end{enumerate}
In the next two subsections, we illustrate the numerical results of this scheme in two benchmark examples (see Examples \ref{Example Mean reverting log-dynamics and isoelastic inverse demand} and \ref{Example Geometric dynamics and isoelastic inverse demand}).

\subsection{Mean reverting log-dynamics and isoelastic inverse demand}
We now go back to Example \ref{Example Mean reverting log-dynamics and isoelastic inverse demand}. 
Recall that the optimization problem writes as
\begin{equation*}
\begin{aligned}
   \text{Maximize} & \quad  J(\alpha,m):= \mathbb{E} \bigg[ \int_0^T e^{-\rho t} \big(  D ( \gamma m_t)^{\xi} (X_t)^{1-\zeta}  - \dfrac{1}{2} \alpha_t ^2 \big) dt +  e^{-\rho T}   D (\gamma m_T)^{\xi} (X_T)^{1-\zeta} \bigg], \\ 
   \text{subject to}  &\quad d X_t = \alpha_t - \delta X_t \log X_t  dt + \sigma X_t dW_t, \quad X_0 =x_0,
\end{aligned}
\end{equation*}
where $D, \gamma>0, \xi>0$ and $\zeta \in (0, 1)$ are parameters, $A = [0, \bar{a}]$,
 and the equilibrium condition is as in \eqref{eq def MFG equilibrium in the geometric case}. 
$\alpha_t$ is an investment that firms can make in order to increase their future production capacity. 
Enterpreneurs can finance the investment with their own funds, but only up to a limit $\bar{a}$ which captures in a reduced form the fact that entrepreneurs have limited resources. 

Since the optimization is over the set $[0,\bar a]$, we have
\begin{equation}\label{eq optimal control example}
    \hat \alpha^m (t,x)  = \argmax_{a \in [0,\bar a]}  \big( a \partial_x V^m (t,x) - \frac12 a^2 \big) = \min \{ \partial_x V^m (t,x), \bar a\},
\end{equation} 
and the HJB equation writes as
\begin{equation*}
\rho V^m= D (\gamma m)^{\xi} x^{1-\zeta}  - \dfrac{1}{2} (\hat{\alpha}^m) ^2 + \mathcal{L}^m_t V^m, \quad V^m_T = e^{-\rho T}  D (\gamma m_T)^{\xi} x_T^{1-\zeta}.
\end{equation*}
Moreover, the evolution of the distribution $g_t$ is given by the associated Kolmogorov Forward equation
\begin{equation*}
    \partial_t g = \mathcal{L}^{m,*}_t g, \quad g(0) = g_0,
\end{equation*}
where $\mathcal{L}^{m,*}_t$ is the adjoint operator of $\mathcal{L}^m_t$
\begin{equation*}
    \mathcal{L}^m_t f = (\hat{\alpha}^m - \delta x \log x )  \partial_x f + \dfrac{1}{2} \sigma^2 x^2  \partial_{xx} f. 
\end{equation*}

We define a grid with $N_x=501$ points for $x$ between $x_{min}, x_{max}$ chosen sufficiently large, and set parameters to the following values: 
\begin{table}[h]
    \centering 
    \begin{tabular}{lrllr}
    \toprule 
    $\rho$ & 0.02 & & $\sigma$ & 1.0 \\
    $D$ & 1.0 & & $\delta$ & 3.0 \\
    $\zeta$ & 0.5 & & $\bar{a}$ & 12.0 \\ 
    $\xi$ & 3.8 & & $T$ & 1.0 \\
    $\gamma$ & 1.2 & & $dt$ & 0.1 \\
    $x_{min}$ & $e^{-15}$ & & $x_{max}$ & $e^{15}$ 
    \end{tabular}
\end{table}

In our baseline calibration, we initialize the measure $g_0$ as the Dirac's delta of the initial condition $x_0:=1$: 
\begin{equation*}
    g_0 (x = x_0) = \begin{cases}
        1 \quad x = 1, \\ 
        0 \quad x \neq 1.
    \end{cases}
\end{equation*}
In other words, we assume that all firms start with the same level of capacity $x_0=1$. 

Given these parameters, we are interested in studying whether different choices for $m^1$ leads to different equilibria. 
We solve the MFG with the numerical scheme associated to the Banach iteration.
We initialize $m^1$ constantly equal to a value $x^i$ of the points in the grid $ \{ x_{min},..., x_{max}\}$ and then we verify the convergence of the resulting sequence to the equilibria, depending on the initial $x^i$. 

\begin{figure}[htbp]
\centering 
\caption{The two equilibria $\underline{m}, \overline{m}$ in Example \ref{Example Mean reverting log-dynamics and isoelastic inverse demand}.}
\includegraphics[width=0.8\textwidth]{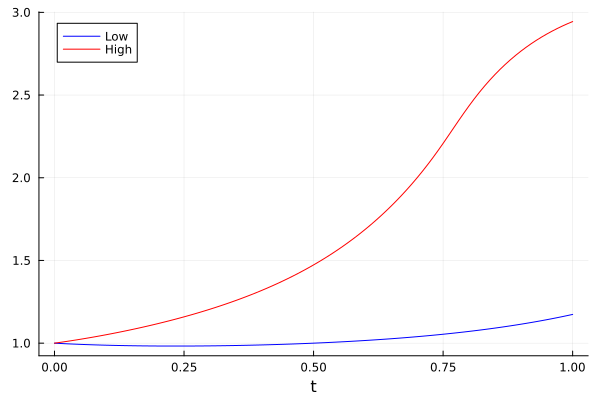}
\label{fig:rev_eqboth}
\end{figure}

\begin{figure}[htbp]
\centering 
\caption{Evolution of the distribution of firms $g_t$ for $t \in [0.05,T]$ in the high (red) and low (blue) equilibria in Example \ref{Example Mean reverting log-dynamics and isoelastic inverse demand}. Color darkness is increasing with $t$.}
\includegraphics[width=0.8\textwidth]{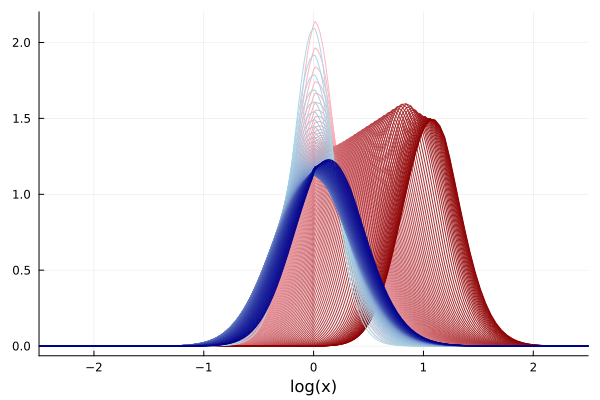}
\label{fig:rev_denspath}
\end{figure}

\begin{figure}[htbp]
\centering 
\caption{The optimal investment $\hat{\alpha}(t,x)$ for $t \in [0,T)$ in the high (red) and low (blue) equilibria in Example \ref{Example Mean reverting log-dynamics and isoelastic inverse demand}. Color darkness is increasing with $t$.}
\begin{subfigure}{0.45\textwidth}
\includegraphics[width=\textwidth]{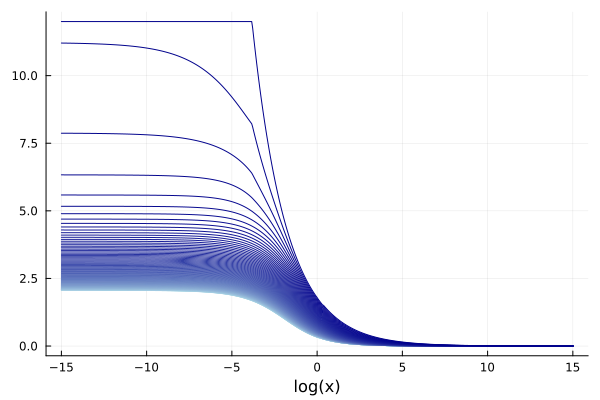}
\end{subfigure}
\begin{subfigure}{0.45\textwidth}
~
\includegraphics[width=\textwidth]{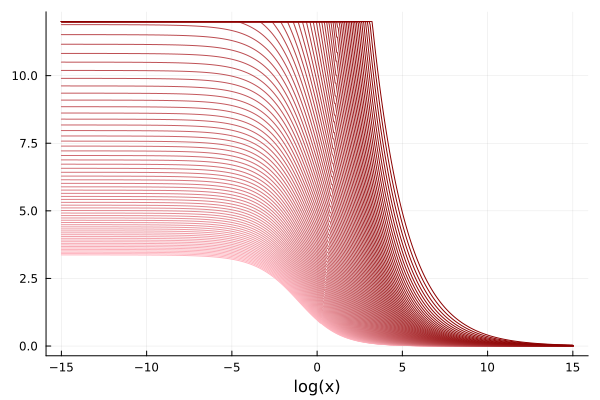}
\end{subfigure}
\label{fig:rev_alphapath}
\end{figure}

Our numerical analysis shows that there are two equilibria, corresponding to high ($\overline m$) and low ($\underline m$) values of $m$, as shown in figure \ref{fig:rev_eqboth}. 
Which equilibrium emerges depends on the initialization of $m^1$.  
If $m^1$ is set high enough, it acts as a "coordination device", pushing firms to increase their investment $\alpha$  as shown in figure \ref{fig:rev_alphapath}.
As a result of the higher investment, in the high equilibrium firms production capacity increases over time and the distribution shifts to the right (see the red lines in Figure \ref{fig:rev_denspath}).
In the low equilibrium instead, investments levels are lower, and so is the resulting capacity distribution. 

\begin{figure}[htbp]
\centering 
\caption{Equilibrium multiplicity as a function of the parameter $\xi$ in Example \ref{Example Mean reverting log-dynamics and isoelastic inverse demand}. The y-axis shows the level of $m^1$ (in logs) above which the model converges to the "high" equilibrium.}
\includegraphics[width=0.8\textwidth]{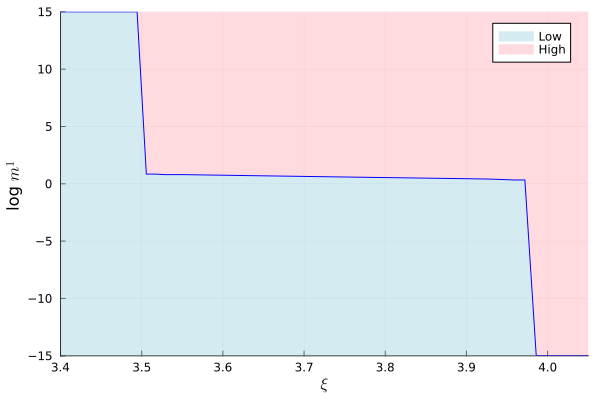}
\label{fig:rev_compxi}
\end{figure}

We perform a comparative static exercise to study how equilibria multiplicity depends on the parameter $\xi$, which regulates the intensity of the strategic interactions among firms. 
Figure \ref{fig:rev_compxi} shows that there is range $[\underline{\xi}, \overline{\xi}]$ in which there are two equilibria. 
On the y-axis we present the values of $\log m^1$ such that the high or low equilibria are selected. 
When $\xi<\underline{\xi}$ or $\xi>\underline{\xi}$ instead there is only one equilibrium and the choice of $m^1$ is irrelevant.

\subsection{Geometric dynamics and isoelastic inverse demand}
We now go back to Example \ref{Example Geometric dynamics and isoelastic inverse demand}. 
Recall that the optimization problem writes as
\begin{equation*}
\begin{aligned}
   \text{Maximize} & \quad  J(\alpha,m):= \mathbb{E} \bigg[\int_0^T e^{-\rho t} \big(  D ( \gamma m_t)^{\xi} (X_t)^{1-\zeta}  - \dfrac{1}{2} \alpha_t ^2 \big) dt +  e^{-\rho T}   D ( \gamma m_T)^{\xi} (X_T)^{1-\zeta} \bigg], \\ 
   \text{subject to}  &\quad d X_t = (\alpha_t - \delta X_t) dt + \sigma X_t dW_t, \quad X_0 =x_0,
\end{aligned}
\end{equation*}
with $A := [0, \bar{a}]$, and the equilibrium condition as in \eqref{eq def MFG equilibrium in the geometric case}.
We leave all parameters unchanged and we again initialize $g_0$ to a degenerate distribution with all mass in $1$.

For $\hat \alpha ^m$ as in \eqref{eq optimal control example}, we can write the HJB equation as
\begin{equation*}
\rho V^m= D (\gamma m)^{\xi} x^{1-\zeta}  - \dfrac{1}{2} (\hat{\alpha}^m) ^2 + \mathcal{L}^m_t V^m, \quad V^m_T = e^{-\rho T}  D (\gamma m_T)^{\xi} x_T^{1-\zeta}.
\end{equation*}
Moreover, the evolution of the distribution $g_t$ is given by the associated Kolmogorov Forward equation
\begin{equation*}
    \partial_t g = \mathcal{L}^*_t g, \quad g(0) = g_0,
\end{equation*}
where $\mathcal{L}^{m,*}_t$ is the adjoint operator of $\mathcal{L}^m_t$
\begin{equation*}
    \mathcal{L}^m_t f = (\hat{\alpha}^m - \delta  x) \partial_x f + \dfrac{1}{2} \sigma^2 x^2  \partial_{xx} f. 
\end{equation*}

\begin{figure}[htbp]
\centering 
\caption{The two equilibria $\underline{m}, \overline{m}$ in Example \ref{Example Geometric dynamics and isoelastic inverse demand}.}
\includegraphics[width=0.8\textwidth]{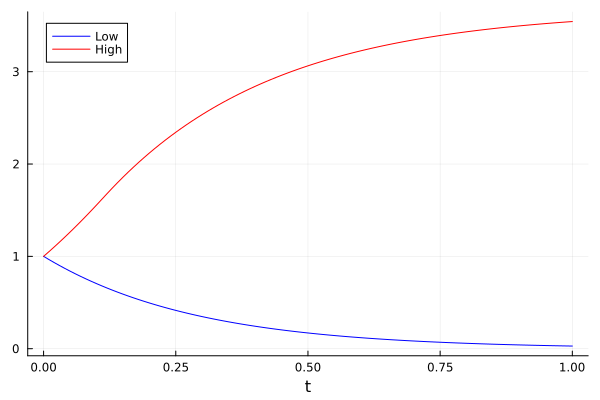}
\label{fig:geo_eqboth}
\end{figure}

\begin{figure}[htbp]
\centering 
\caption{Evolution of the distribution of firms $g_t$ for $t \in [0.05,T]$ in the high (red) and low (blue) equilibria in Example \ref{Example Geometric dynamics and isoelastic inverse demand}. Color darkness is increasing with $t$.}
\includegraphics[width=0.8\textwidth]{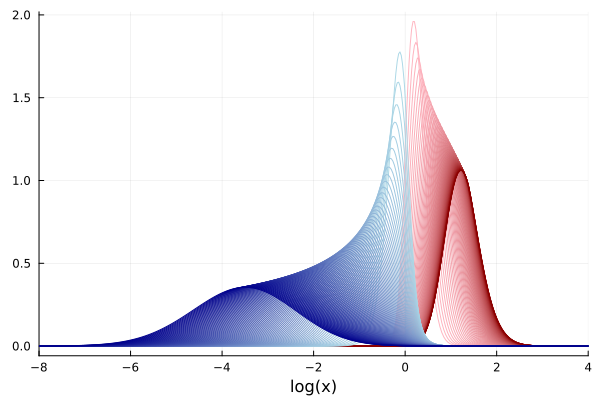}
\label{fig:geo_denspath}
\end{figure}

\begin{figure}[htbp]
\centering 
\caption{The optimal investment $\hat{\alpha}(t,x)$ for $t \in [0,T)$ in the high (red) and low (blue) equilibria in Example \ref{Example Geometric dynamics and isoelastic inverse demand}. Color darkness is increasing with $t$.}
\begin{subfigure}{0.45\textwidth}
\includegraphics[width=\textwidth]{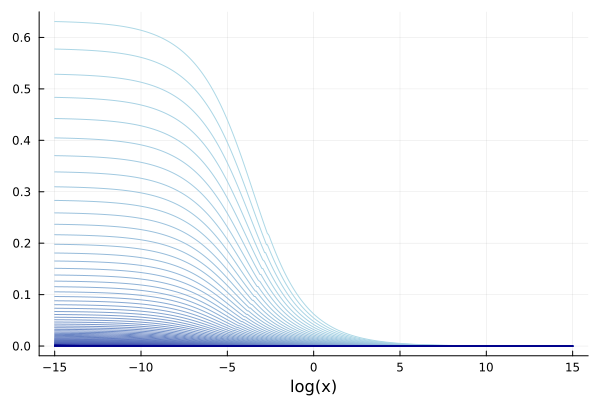}
\end{subfigure}
\begin{subfigure}{0.45\textwidth}
~
\includegraphics[width=\textwidth]{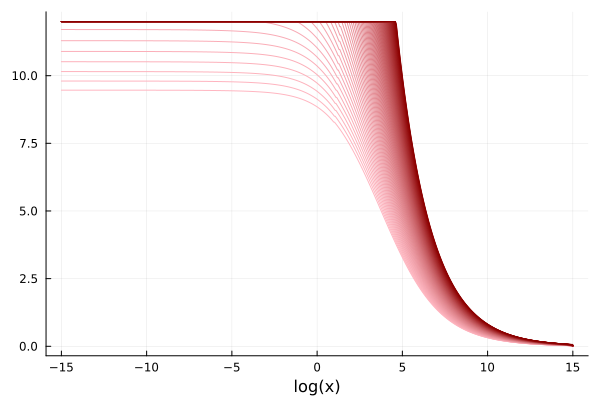}
\end{subfigure}
\label{fig:geo_alphapath}
\end{figure}

\begin{figure}[htbp]
\centering 
\caption{Equilibrium multiplicity as a function of the parameter $\xi$ in Example \ref{Example Geometric dynamics and isoelastic inverse demand}. The y-axis shows the level of $m^1$ (in logs) above which the model converges to the "high" equilibrium.}
\includegraphics[width=0.8\textwidth]{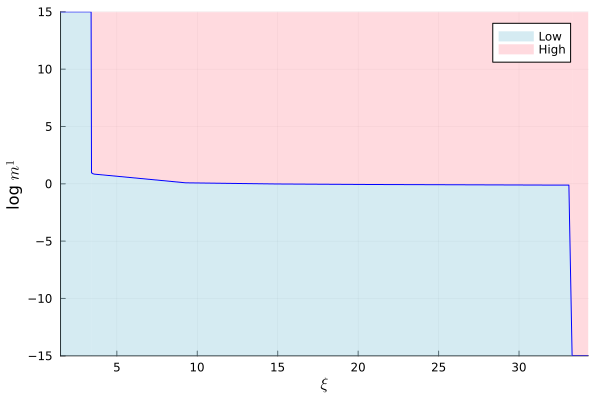}
\label{fig:geo_compxi}
\end{figure}

We perform the numerics and illustrate its result using the same parameters as in the previous subsection. 
Also in this case we find that the model exhibits two equilibria, shown in Figure \ref{fig:geo_eqboth}. 
The absence of the mean reverting component results in a lower minimum equilibrium $\underline m$ and in an higher maximal equilibrium $\overline m$. 
In particular, the minimal equilibrium approximate $0$ towards the end of the game.
Consistently, we see in Figure \ref{fig:geo_denspath} that the minimal and maximal distributions move in opposite directions over time.
Indeed, while the optimal investments at the lower equilibrium declines to $0$ over time, those related to the higher equilibrium rises over time reaching the maximum $\bar a$ for low values of the state (see Figures \ref{fig:geo_alphapath}). 

We perform again a comparative statics in the strength parameter  $\xi$.
Similarly to the previous example, we find a range $[\underline{\xi}, \overline{\xi}]$ in which there are two equilibria (see Figure \ref{fig:geo_compxi}).
However, in comparison to the previous example (cf.\ Figure \ref{fig:rev_compxi}) we notice that the region with multiple equilibria is significantly wider.

\section{Concluding discussion}
    \label{section Concluding discussion}
In this paper, we consider continuous-time mean-field stochastic games with strategic complementarities.
The interaction between the representative productive firm and the population of rivals comes through the price at which the produced good is sold and the intensity of interaction is measured by a so-called "strength parameter" $\xi$. 
Via lattice-theoretic arguments we first prove existence of equilibria and provide comparative statics results when varying $\xi$.  
These give equilibrium-selection criteria depending on the total reward or the price at equilibrium.
Moreover, a careful numerical study based on iterative equilibria converging to suitable maximal and minimal equilibria allows then to study in relevant financial examples how the emergence of  multiple equilibria is related to the strength of the strategic interaction.
This experiments are interesting also from the theoretical point of view, as they illustrate the convergence of the numerical schemes related to the Banach iteration without necessarily relying on a contraction theorem.

Possible extensions of the model could include a common noise, which could be incorporated in a stochastic dynamic strength parameter $(\xi_t)_t$.
The comparative statics results, moreover can be useful when considering the problem of a social planner who can affect the parameter $\xi$ with the intent of optimizing certain quantities related to the minimal or the maximal equilibria $\underline m ^\xi$ and $\overline m ^\xi$.


\appendix

\section{Proofs}
    \label{Appendix}
\subsection{Auxiliary results}
In this appendix we discuss some of the proofs of the results in Section \ref{section A general mean field game model and its properties}.
Thanks to Lemma \ref{lemma optimal controls MFG}, for any $(m _t)_t$ and $\xi$ there exists a unique optimal investment $\alpha^{m,\xi}$ with related production $X^{m,\xi}$.
We can therefore define the best response map
\begin{equation}\label{eq definition brm}
\Lambda^\xi (m) = (\mathbb E [X^{m,\xi}_t ] )_t.
\end{equation}
We first show the following auxiliary result, which is a stronger version of Lemma \ref{lemma brm increasing}.
For $t \in [0,T]$, the symbol $\delta_t$ denotes the Dirac's delta at point $t$; that is, $\delta_t(A)= 1$ if $t \in A$ and $\delta_t(A)= 0$ if $t \notin A$, for any $A$ Borel subset of $[0,T]$. 
\begin{lemma}\label{lemma brm increasing parameters} 
The following statements hold true:
\begin{enumerate}
\item Under condition \eqref{eq assumption supermodularity}, for any $\xi, m , \bar m$ with $m_t \leq \bar m_t$ $dt+\delta_T$-a.e.\ in $[0,T]$, we have $X^{m,\xi}_t \leq X^{\bar m,\xi}_t$ for all $t\in [0,T]$, $\mathbb P$-a.s.\ and $\Lambda ^\xi (m)_t \leq \Lambda^\xi (\bar m )_t$ for all $t\in [0,T]$; 
\item Under conditions \eqref{eq assumption supermodularity} and \eqref{eq assumption supermodularity parameter}, for any $\xi, \bar \xi, m , \bar m$ with $\xi \leq \bar \xi$ and $m_t \leq \bar m_t$ $dt+\delta_T$-a.e.\ in $[0,T]$, we have $X^{m,\xi}_t \leq X^{\bar m, \bar \xi}_t$ for all $t\in [0,T]$, $\mathbb P$-a.s.\ and $\Lambda ^\xi (m)_t \leq \Lambda^{\bar \xi} (\bar m )_t$ for all $t\in [0,T]$.
\end{enumerate}
\end{lemma}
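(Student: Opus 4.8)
Since part (1) is the case $\xi=\bar\xi$ of part (2) (with $\Lambda^{\xi}$ in place of $\Lambda^{\bar\xi}$), and the statements on $\Lambda$ follow from the pathwise ones by taking expectations, the plan is to prove that, under \eqref{eq assumption supermodularity} and \ref{eq assumption supermodularity parameter}, the conditions $\xi\le\bar\xi$ and $m_t\le\bar m_t$ $dt+\delta_T$-a.e.\ imply $X^{m,\xi}_t\le X^{\bar m,\bar\xi}_t$ for all $t\in[0,T]$, $\mathbb P$-a.s. I would follow the classical route for supermodular control problems: (a) represent the optimal control in feedback form through the dynamic programming value function; (b) show the value function has increasing differences in the state and in the parameter pair $(m,\xi)$; (c) deduce the optimal feedback is nondecreasing in $(m,\xi)$; (d) conclude by the one-dimensional comparison principle for SDEs. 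Note first that the state equation \eqref{eq:SDE} does not involve $(m,\xi)$: writing $X^{x,a}$ for the solution of \eqref{eq:SDE} started from $X_t=x$ under $a\in\mathcal A$ and $V^{m,\xi}(t,x)$ for the value of the representative firm's problem started at $(t,x)$ with parameters $(m,\xi)$, Lemma \ref{lemma optimal controls MFG} together with a verification argument gives $\alpha^{m,\xi}_t=\hat\alpha^{m,\xi}(t,X^{m,\xi}_t)$ with $\hat\alpha^{m,\xi}(t,x)=\argmax_{a\in A}\{b(x,a)\,\partial_x V^{m,\xi}(t,x)-c(a)\}$, the maximizer being unique since $c$ is strictly convex.

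For step (b) I would show that $x\mapsto V^{\bar m,\bar\xi}(t,x)-V^{m,\xi}(t,x)$ is nondecreasing whenever $m\le\bar m$ ($dt+\delta_T$-a.e.) and $\xi\le\bar\xi$; only the $dt+\delta_T$-a.e.\ class of $m$ on $[t,T]$ enters $V^{m,\xi}(t,\cdot)$, which explains that mode of comparison. The ingredients are: (i) monotone dependence of $X^{x,a}$ on the initial condition and on $a$ (comparison principle for \eqref{eq:SDE}, using that $\sigma$ is affine, hence Lipschitz, and that $b$ is nondecreasing in $a$, since investment expands production); and (ii) that $x\mapsto xP(x,m;\xi)$ has increasing differences in $(x,m)$ and in $(x,\xi)$, which are exactly \eqref{eq assumption supermodularity}(v) and Assumption \ref{eq assumption supermodularity parameter}. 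Concretely, for $x\le x'$ one takes controls $\alpha$ near-optimal for $V^{m,\xi}(t,x')$ and $\bar\alpha$ near-optimal for $V^{\bar m,\bar\xi}(t,x)$, uses $\alpha\vee\bar\alpha$ as competitor in $V^{\bar m,\bar\xi}(t,x')$ and $\alpha\wedge\bar\alpha$ in $V^{m,\xi}(t,x)$, and combines the pathwise ordering of the four resulting trajectories with the concavity \eqref{eq assumption supermodularity}(iv) and the mixed monotonicities \eqref{eq assumption supermodularity}(v), Assumption \ref{eq assumption supermodularity parameter}. When $b$ is affine the four trajectories satisfy the pathwise identity $X^{x',\alpha\vee\bar\alpha}+X^{x,\alpha\wedge\bar\alpha}=X^{x',\alpha}+X^{x,\bar\alpha}$, making the estimate transparent; the concave-$b$ case (in which, by \eqref{eq assumption supermodularity}(ii), $xP$ is in addition nondecreasing in $x$) needs a slightly more careful but analogous argument. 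Differentiating in $x$ then gives that $\partial_x V^{m,\xi}(t,x)$ is nondecreasing in $(m,\xi)$.

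Step (c) is Topkis' theorem: $b(x,a)p-c(a)$ has increasing differences in $(a,p)$ because $a\mapsto b(x,a)$ is nondecreasing, so its (unique, by strict convexity of $c$) maximizer over $A=[0,\bar a]$ is nondecreasing in $p$; combined with step (b) this yields $\hat\alpha^{m,\xi}(t,x)\le\hat\alpha^{\bar m,\bar\xi}(t,x)$ for all $(t,x)$. For step (d), $X^{m,\xi}$ and $X^{\bar m,\bar\xi}$ solve \eqref{eq:SDE} with the same Brownian motion, the same affine (hence Lipschitz) diffusion coefficient, the same initial datum $x_0$, and drifts $b(x,\hat\alpha^{m,\xi}(t,x))\le b(x,\hat\alpha^{\bar m,\bar\xi}(t,x))$, so the one-dimensional comparison principle for SDEs gives $X^{m,\xi}_t\le X^{\bar m,\bar\xi}_t$ for all $t$, $\mathbb P$-a.s.; taking expectations yields $\Lambda^{\xi}(m)_t\le\Lambda^{\bar\xi}(\bar m)_t$, and setting $\xi=\bar\xi$ recovers part (1).

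I expect the main obstacle to be making steps (a)--(b) rigorous, i.e.\ the feedback representation together with enough regularity of $\partial_x V^{m,\xi}$ to justify both the differentiation in (b) and the comparison principle in (d). This can be handled either via the regularity theory for the HJB equation under \eqref{eq assumption supermodularity}(i)--(ii), or, bypassing PDEs, by replacing (a)--(c) with a stochastic-maximum-principle argument: the optimal pair and its adjoint process $(X^{m,\xi},\alpha^{m,\xi},Y^{m,\xi})$ solve a coupled FBSDE whose forward and backward monotonicities in $(x,m,\xi)$ are encoded precisely in \eqref{eq assumption supermodularity}(iv)--(v) and Assumption \ref{eq assumption supermodularity parameter}, and an order-preserving Picard iteration delivers $X^{m,\xi}\le X^{\bar m,\bar\xi}$ and $Y^{m,\xi}\le Y^{\bar m,\bar\xi}$ directly, with $\alpha^{m,\xi}_t\le\alpha^{\bar m,\bar\xi}_t$ then following from monotonicity of the pointwise Hamiltonian maximizer in the adjoint variable.
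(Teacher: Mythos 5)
Your plan takes a genuinely different route from the paper, and its central step contains a gap. The paper does not go through the value function at all: it fixes the two optimal controls $\alpha=\argmax J(\cdot,m;\xi)$ and $\bar\alpha=\argmax J(\cdot,\bar m;\bar\xi)$ (both trajectories started from the \emph{same} $x_0$), builds the switched controls $\alpha^\wedge,\alpha^\vee$ of \eqref{eq control sup inf} --- note these switch according to the ordering of the \emph{trajectories} $\{X_t<\bar X_t\}$, not according to the pointwise order of the control values, which is exactly what guarantees $X^{\alpha^\vee}=X\vee\bar X$ and $X^{\alpha^\wedge}=X\wedge\bar X$ --- then combines the pathwise cost identity $c(\alpha)-c(\alpha^\wedge)=c(\alpha^\vee)-c(\bar\alpha)$ with the increasing differences of $xP(x,m;\xi)$ in $(x,(m,\xi))$ to show that $\alpha^\vee$ is also a maximizer of $J(\cdot,\bar m;\bar\xi)$; uniqueness from Lemma \ref{lemma optimal controls MFG} then forces $\alpha^\vee=\bar\alpha$, i.e.\ $X\le\bar X$. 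No regularity of $V$, no feedback representation, and no SDE comparison with ordered drifts is needed.

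The gap in your approach is step (b). You need increasing differences of $V^{m,\xi}(t,x)$ jointly in the \emph{initial state} $x$ and the parameter $(m,\xi)$, which is a strictly stronger property than the lemma asserts, and the coupling you propose does not deliver it. Writing $g(x,\theta)=xP(x,m;\xi)$ with $\theta=(m,\xi)$ and testing the one-step analogue with affine dynamics $x\mapsto x+a$: taking $\alpha$ near-optimal for $(x',\theta)$, $\bar\alpha$ near-optimal for $(x,\bar\theta)$ and using $\alpha\vee\bar\alpha$, $\alpha\wedge\bar\alpha$ as competitors, the case $\bar\alpha>\alpha$ reduces to $g(x'+\bar a,\theta)-g(x+\bar a,\theta)\ge g(x'+a,\theta)-g(x+a,\theta)$ with $\bar a>a$, which is the \emph{reverse} of what concavity of $g(\cdot,\theta)$ (Assumption \ref{eq assumption supermodularity}(iv)) gives. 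So the rearrangement inequality fails in exactly one of the two cases, and the other natural pairings of competitor controls fail similarly. Joint supermodularity of a value function in (state, parameter) is known to require additional structure beyond supermodularity of the payoff in (state, parameter) plus concavity, so this is not a fixable bookkeeping issue but a missing idea; the same obstruction infects your FBSDE fallback, where the claimed ``order-preserving Picard iteration'' would need the adjoint process to be monotone in the \emph{state} as well as in $(m,\xi)$. A secondary (acknowledged but real) issue is that even granting monotone feedbacks, the comparison principle in step (d) needs some regularity of $x\mapsto b(x,\hat\alpha^{m,\xi}(t,x))$ that is not available for free. The fix is to abandon the pointwise-in-$x$ comparison of feedbacks and compare the two optimal state processes directly as the paper does.
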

\begin{proof}
We first prove the second claim, thus assuming both conditions \eqref{eq assumption supermodularity} and \eqref{eq assumption supermodularity parameter}.

Take $m, \bar{m}$ such that $ m _t \leq \bar m _t$ and $\xi \leq  \bar \xi$. 
Let $\alpha  = \argmin J ( \cdot, m; \xi)$ and $\bar \alpha  = \argmin J ( \cdot, \bar m; \bar \xi)$ and set $X = X^\alpha$ and $\bar X = X^{\bar \alpha}$.
Following the the proof of Lemma 3.1 in \cite{dianetti2022strong}, one can show that the controls $\alpha^\land, \alpha^\lor$ defined by 
\begin{equation}\label{eq control sup inf} 
\alpha_t^{\land} := \alpha_t \mathds{1}_{ \{ X_t < \bar X_t \}} + \bar{\alpha}_t \mathds{1}_{ \{ X_t \geq  \bar X_t \}} 
\quad  \text{and} \quad 
\alpha_t^{\lor} := \bar{\alpha}_t \mathds{1}_{ \{ X_t < \bar X_t \}} + {\alpha}_t \mathds{1}_{ \{ X_t \geq \bar X_t \}}, 
\end{equation} 
are elements of $\mathcal A$ and such that $X \land \bar X  := (\min \{ X_t,\bar X _t\} ) _{t\in [0,T]} = X^{\alpha^\land}$ and $X \lor \bar X := (\max \{ X_t,\bar X _t\} ) _{t\in [0,T]}= X^{\alpha^\lor}$. 

By the admissibility  of $\alpha^{\land}$ and the optimality of $\alpha$ we can write 
\begin{align*}
0&\leq J(\alpha,{m};\xi) - J(\alpha^{\land},m; \xi ) \\ \notag
&=   \mathbb{E} \bigg[ \int_0^T e^{-\rho t} \big( X_t P(X_t, m_t; \xi)  - X_t \land \bar X_t P(X_t \land \bar X_t, m_t; \xi) -  c(\alpha_t) + c(\alpha^\land _t ) \big)dt  \bigg] \\ \notag
& \quad +  \mathbb{E} \bigg[  e^{-\rho T} \big( X_T P(X_T, m_T; \xi)  - X_T \land \bar X_T P(X_T \land \bar X_T, m_T; \xi) \big)  \bigg] . \notag
\end{align*}
Moreover, by \eqref{eq control sup inf}, we easily find 
$$
\mathbb{E} \bigg[ \int_0^T e^{-\rho t} \big( -  c(\alpha_t) + c(\alpha^\land _t ) \big)dt  \bigg]
=\mathbb{E} \bigg[ \int_0^T e^{-\rho t} \big( -  c(\alpha^\lor_t) + c(\bar \alpha _t ) \big)dt  \bigg], 
$$
which allows to rewrite the previous inequality as
\begin{align*}
0 &\leq \mathbb{E} \bigg[ \int_0^T e^{-\rho t} \big( X_t\lor \bar X_t P( X_t\lor \bar X_t, m_t; \xi)  - \bar X_t P( \bar X_t, m_t; \xi) - c(\alpha^\lor_t) + c(\bar \alpha _t \big)dt  \bigg] \\
& \quad +  \mathbb{E} \bigg[  e^{-\rho T} \big( X_T\lor \bar X_T P( X_T\lor \bar X_T, m_T; \xi)  - \bar X_T P( \bar X_T, m_T; \xi)  \big)  \bigg] . 
\end{align*}
Next, thanks to conditions \eqref{eq assumption supermodularity} and \eqref{eq assumption supermodularity parameter}, we obtain
\begin{align*}
0 &\leq \mathbb{E} \bigg[ \int_0^T e^{-\rho t} \big( X_t\lor \bar X_t P( X_t\lor \bar X_t, \bar m_t; \bar \xi)  - \bar X_t P( \bar X_t, \bar m_t; \bar \xi) - c(\alpha^\lor_t) + c(\bar \alpha _t \big)dt  \bigg] \\
& \quad +  \mathbb{E} \bigg[  e^{-\rho T} \big( X_T\lor \bar X_T P( X_T\lor \bar X_T, \bar m_T; \bar \xi)  - \bar X_T P( \bar X_T, \bar m_T; \bar \xi)  \big)  \bigg] \\
& = J(\alpha^\lor ,\bar{m};\bar \xi) - J(\bar \alpha , \bar m; \bar \xi ), 
\end{align*}
which implies that $\alpha^\lor $ is a maximizer for $J(\cdot,\bar m; \bar \xi)$.
By Lemma \ref{lemma optimal controls MFG}, we conclude that $\alpha^\lor = \bar \alpha$, so that $\bar X = X^{\alpha^\lor} = X \lor \bar X$. 
Thus, we conclude that
$$
X_t \leq \bar X_t \ \text{for any $t\in [0,T]$, $\mathbb P$-a.s.,}
$$
from which we obtain 
$$
\Lambda^\xi (m) = (\mathbb E [ X_t])_t \leq(\mathbb E [\bar X_t])_t = \Lambda^{\bar \xi} (\bar m),
$$
which is the desired monotonicity.

The proof of the first claim simply follows by repeating the argument above for fixed $\xi$, and by observing that in this case condition \eqref{eq assumption supermodularity parameter} is not necessary.
\end{proof}

\subsection{Proof of Theorem \ref{theorem existence MFGE}}
The aim is to employ Tarski fixed point theorem. 
Let $ m^{\text{\tiny{Min}}}$ and $ m^{\text{\tiny{Max}}}$ be as in \eqref{eq definition m min max}. 
Define now the set of functions 
\begin{equation*}
    L := \{ m: [0,T] \to \mathbb R \, |\,  \text{$m$ is measurable and } m^{\text{\tiny{Min}}}_t \leq m_t \leq  m^{\text{\tiny{Max}}}_t, \ \delta_0 + dt+ \delta_T\text{-a.e.\ in }  [0,T] \},
\end{equation*}
and define the order relation $\leq$ by
\begin{equation}
    \label{eq order relation}
    m \leq \bar m \text{ if and only if } m_t  \leq \bar m_t,  \  \delta_0 + dt+ \delta_T \text{-a.e.\ in }  [0,T].
\end{equation}
Notice that, thanks to \eqref{equation controlled equation in interval}, the map $\Lambda^\xi: L \to L$ is well defined.
Observe that the set of MFGE coincides with the set of fixed points of the map $\Lambda$.

The partially ordered set $(L,\leq )$ is a complete lattice and the map $\Lambda$ is nondecreasing (cf.\ Lemma \ref{lemma brm increasing}). 
Thus, by Tarski fixed point theorem, the set of fixed points of $\Lambda$ is a nonempty complete lattice and so is the set of MFGE.
In particular, there exists minimal and maximal MFGE.

\subsection{Proof of Theorem \ref{theorem MFGE convergence}} 
We prove each claim separately.
\subsubsection{Proof of Claim \ref{theorem MFGE convergence.banach}}
We only show the convergence to the minimal equilibrium. The convergence to the maximal equilibrium can be shown with the same arguments.

For $m^1 =  m^{\text{\tiny{Min}}}$, thanks to \eqref{equation controlled equation in interval} one has
\begin{equation*}
    m^{2} = \Lambda^\xi (m^1) \geq m^1.
\end{equation*}
Thus, since $\Lambda^\xi$ is nondecreasing, we have $m^{3} = \Lambda^\xi (m^2) \geq \Lambda^\xi (m^1) = m^{2}$
and by a simple induction argument we find 
\begin{equation}\label{eq monotonicity banach}
\text{$m^n \leq m^{n+1}$ for any $n$.}
\end{equation}
Thus, we can define
\begin{equation*}
    m^*_t = \lim_n m^n_t  = \sup _n m^n_t,  \quad   \delta_0 + dt+ \delta_T \text{-a.e.\ in }  [0,T]
\end{equation*}
and we need to verify that $m^*$ is the minimal MFGE.

We first show that $m^*$  is a MFGE. 
Define $(\alpha^n, X^{n})$ as the optimal pair for $m^{n-1}$. 
By Lemma \ref{lemma brm increasing parameters} and the monotonicity in \eqref{eq monotonicity banach}, we have $X^{n} \leq X^{n+1}$, so that we can define 
\begin{equation*}
    X^{*}_t : = \lim _n X^{n}_t = \sup _n X^{n}_t.
\end{equation*}
Using stability properties and comparison principles together with the stochastic maximum principle (for more details, we refer to \cite{dianetti2022strong}), one can show that $X^*$ is the optimal production for $m^*$.
Moreover, from the latter limits and the monotone convergesnce theorem, we deduce that
$$
m^*_t = \lim_n m^n_t = \lim _n \mathbb E [  X^{n-1}_t ] =  \mathbb E [  X^{*}_t ], 
$$
which show that $m^*$ is a MFGE.
 
We finally show that $m^*$ is the minimal MFGE. 
If $m$ is a MFGE, then we have $m = \Lambda^\xi (m)$, and therefore 
\begin{equation*}
    m^1 \leq m.
\end{equation*}
Thus, by iterating the map $\Lambda^\xi$, we obtain
\begin{equation*}
    m^n \leq m \text{ for any $n$.}
\end{equation*}
Thus, taking limits in $n$ we conclude that 
\begin{equation*}
    m^* \leq m,
\end{equation*}
which complete the proof.

\subsubsection{Proof of Claim \ref{theorem MFGE convergence.fictitious}}
Again, we only show the convergence to the minimal equilibrium. The convergence to the maximal equilibrium can be shown with the same arguments.

For $\hat m ^1 =  m^{\text{\tiny{Min}}}$, thanks to \eqref{equation controlled equation in interval}, one has
\begin{equation*}
    \hat{m}^{2} = \Lambda^\xi (\hat{m}^1) \geq \hat{m}^1.
\end{equation*}
Thus, since $\Lambda^\xi$ is nondecreasing, we have $\Lambda^\xi (\hat{m}^2) \geq \Lambda^\xi (\hat{m}^1 ) = \hat{m}^2$, which in turn gives  
\begin{equation*}    
\hat{ m}^{3} =\frac 1 3 \big( 2 \hat{m}^2 +  \Lambda^\xi (\hat{m}^2) \big) \geq \hat{m}^{2} 
\end{equation*}
and by a simple induction argument we find 
\begin{equation*}
\text{$\hat{m}^n \leq \hat{m}^{n+1}$ for any $n$.}
\end{equation*} 
Thus, we can define $\hat{m}^* = \sup_n \hat{m}^n$ and with the same argument as in the proof of Claim \ref{theorem MFGE convergence.banach}, we can show that $\hat{m}^*$ is the minimal MFGE.

\subsection{Proof of Theorem \ref{theorem comparative statics at equilibrium}}
We use the results of Claim \ref{theorem MFGE convergence.banach} in Theorem \ref{theorem MFGE convergence} and the monotonicity of $\Lambda$ of Lemma \ref{lemma brm increasing parameters}. 

For $\xi \leq \bar \xi$, and $ m ^{1,\xi} := m ^{1,\bar \xi}:=  m^{\text{\tiny{Min}}}$,
define by induction the sequences $m ^{n+1, \xi} := \Lambda^\xi (m ^{n,\xi}) $ and $m ^{n+1, \bar \xi} := \Lambda^{\bar \xi} (m ^{n,\bar \xi})$.
Notice that, by monotonicity of $\Lambda$ in the parameter, we have
\begin{equation*}
    m ^{2,  \xi} = \Lambda^{ \xi} (m ^1) \leq  \Lambda^{\bar \xi} (m ^1)  =  m ^{2, \bar \xi},
\end{equation*}
and that, if $m^{n,\xi} \leq m^{n,\bar \xi}$, then 
\begin{equation*}
     m ^{n+1,  \xi} = \Lambda^{ \xi} (m ^{n,  \xi}) \leq  \Lambda^{\bar \xi} (m ^{n, \bar \xi})  =  m ^{n+1, \bar \xi}.
\end{equation*}
Thus, by induction we obtain that $m^{n,\xi} \leq m^{n,\bar \xi}$ for any $n$, and by  Claim \ref{theorem MFGE convergence.banach} in Theorem \ref{theorem MFGE convergence} we obtain
$$
\underline m ^\xi = \sup_n m^{n,\xi} \leq \sup_n m^{n,\bar \xi} = \underline m^{\bar \xi},
$$
as desired. 

In the same way, we can show that $\overline m ^\xi \leq \overline m^{\bar \xi}$, thus completing the proof.

\smallskip 
\textbf{Acknowledgements.} 
Funded by the Deutsche Forschungsgemeinschaft (DFG, German Research Foundation) - Project-ID 317210226 - SFB 1283.

\textbf{Disclosure statement.} The authors declare that none of them has conflict of interests to mention.

\bibliographystyle{siam}
\bibliography{main.bib}

\end{document}